\numberwithin{equation}{section}
\newtheorem{thm}[equation]{Theorem} 
\newtheorem{prop}[equation]{Proposition}
\newtheorem{lemma}[equation]{Lemma} 
\newtheorem{cor}[equation]{Corollary}
\newtheorem{example}[equation]{Example}
\newtheorem{remark}[equation]{Remark}
\DeclareMathOperator{\Alt}{Alt}
\DeclareMathOperator{\Ext}{Ext}
\DeclareMathOperator{\gr}{gr} 
\DeclareMathOperator{\Ima}{Im}
\DeclareMathOperator{\im}{Im}
\DeclareMathOperator{\Span}{Span}
\DeclareMathOperator{\Tor}{Tor}
\newcommand{\NN}{\mathbb N}
\newcommand{\DOT}{\setlength{\unitlength}{1pt}\begin{picture}(2.5,2)
               (1,1)\put(2,3.5){\circle*{3}}\end{picture}}
\newcommand{\coh}{{\rm H}}
\newcommand{\id}{\mbox{\rm id\,}}
\newcommand{\Hom}{\mbox{\rm Hom\,}}
\renewcommand{\ker}{\mbox{\rm Ker\,}}
\newcommand{\ot}{\otimes}
\newcommand{\cH}{\mathcal{H}}
\newcommand{\ZZ}{\mathbb{Z}}
\newcommand{\tensor}{\otimes}
\newcommand{\HH}{{\rm HH}}
\newcommand{\Wedge}{\textstyle\bigwedge}
\newcommand{\LH}{\text{LH}}
\begin{document}
\begin{abstract}
Braverman and Gaitsgory
gave necessary and sufficient conditions
for a nonhomogeneous
quadratic algebra to satisfy the Poincar\'e-Birkhoff-Witt property
when its homogeneous version is Koszul.
We widen their viewpoint and consider a 
quotient of an algebra that is free over some 
(not necessarily semisimple) subalgebra.
We show that their theorem holds under
a weaker hypothesis: We require the homogeneous version 
of the nonhomogeneous quadratic algebra to be
the skew group algebra (semidirect product algebra)
of a finite group acting on a Koszul algebra,
obtaining conditions for the Poincar\'e-Birkhoff-Witt property 
over (nonsemisimple) group algebras.
We prove our main results by exploiting a double complex
adapted from Guccione, Guccione, and Valqui
(formed from a Koszul complex and a resolution of the group), 
giving a practical way to analyze Hochschild cohomology
and deformations of skew group algebras
in positive characteristic.
We apply these conditions to graded Hecke algebras and 
Drinfeld orbifold algebras 
(including rational Cherednik algebras and symplectic
reflection algebras) in arbitrary characteristic, 
with special interest in 
the case when the characteristic of the underlying field
divides the order of the acting group.
\end{abstract}
\title[Poincar{\' e}-Birkhoff-Witt Theorem] 
{A Poincare-Birkhoff-Witt Theorem \\
for Quadratic Algebras with Group Actions}

\date{September 25, 2012}
\author{A.\ V.\ Shepler}
\address{Department of Mathematics, University of North Texas,
Denton, Texas 76203, USA}
\email{ashepler@unt.edu}
\author{S.\  Witherspoon}
\address{Department of Mathematics\\Texas A\&M University\\
College Station, Texas 77843, USA}\email{sjw@math.tamu.edu}
\thanks{Key Words:
Koszul algebras, skew group algebras, Hochschild cohomology,
Drinfeld orbifold algebras;
AMS Subject Classification Codes: 16S37, 16E40, 16S80, 16S35}
\thanks{The first author was partially supported by NSF grants
\#DMS-0800951 and \#DMS-1101177.
The second author was partially supported by
NSF grants 
\#DMS-0800832 and \#DMS-1101399.}

\maketitle

\section{Introduction}

Poincar\'e-Birkhoff-Witt properties are used
to isolate convenient canonical bases of algebras,
identify algebras with formal deformations,
and depict associated graded structures of algebras explicitly.
They reveal how a set of generators for an ideal of relations 
defining an algebra may capture the graded structure implied
by the entire ideal. In 1996, Braverman and Gaitsgory~\cite{BG}  
gave necessary and sufficient conditions for a nonhomogeneous 
quadratic algebra to satisfy the Poincar\'e-Birkhoff-Witt
property when a homogeneous version is Koszul.
Poincar\'e-Birkhoff-Witt theorems are often established by inspection of monomial
orderings, noncommutative Gr\"obner bases, or other methods from
noncommutative computational algebra.  In contrast, Braverman and Gaitsgory used
Hochschild cohomology to streamline arguments.
They identified conditions that often
provide an elegant alternative to direct application
of Bergman's Diamond Lemma~\cite{Bergman} (which can be tedious)
for determining when a Poincar\'e-Birkhoff-Witt property holds.
Thus their theory has been used widely to investigate various
algebras in many settings.
(See~\cite{PolishchukPositselski} and references therein.)

Etingof and Ginzburg~\cite{EtingofGinzburg}
noted that the conditions
for a Poincar\'e-Birkhoff-Witt property developed by 
Braverman and Gaitsgory~\cite{BG} may be generalized
by replacing the ground field $k$ by any semisimple ring
(for example, a group ring $kG$) using the theory of 
Beilinson, Ginzburg, and Soergel~\cite{BGS} of Koszul rings.
They exploited this generalization 
in investigations of symplectic reflection algebras.
Later others applied it even more generally, for example,
Halbout, Oudom, and Tang~\cite{HOT}.
Yet the theorem of Braverman and Gaitsgory
and the generalization observed by 
Etingof and Ginzburg~\cite{EtingofGinzburg}
do not apply in some interesting settings,
in particular, when replacing the ground field
by a ring that is not semisimple.
For example, a theory of symplectic reflection algebras and graded Hecke algebras
over fields of arbitrary characteristic would include
exploration of quotient algebras over group rings $kG$ 
when the characteristic of the field $k$ divides the
order of the finite group $G$.

We show in Theorem~\ref{thm:main} that the conditions of Braverman and Gaitsgory
for a Poincar\'e-Birkhoff-Witt property hold over 
{\em any} finite group algebra
(including nonsemisimple group algebras).
We establish necessary and 
sufficient conditions for a filtered
quadratic algebra over $kG$ to satisfy the PBW property when its
homogeneous version is a skew group algebra (semidirect
product algebra) formed from a finite group
$G$ acting on a Koszul algebra $S$. 
The proof uses deformation theory and 
an explicit bimodule resolution $X_{\DOT}$.
This resolution, defined in Section~\ref{sec:resolution}
and adapted from Guccione, Guccione, and Valqui~\cite{GGV}, is 
comprised of both the Koszul resolution for $S$ and the bar resolution
for $kG$. 
Semisimplicity does not play a crucial role. 
However, there are potentially many graded deformations of the 
skew group algebra $S\# G$ in positive characteristic that this theory 
does not identify,
corresponding to components of the resolution $X_{\DOT}$ 
that are not fully explored here. 
Other methods have been employed in the modular setting in some
special cases; for example, 
see \cite{Griffeth} for representation-theoretic techniques
and \cite{doa} for application of the Diamond Lemma. 

We compare our techniques with those in the nonmodular setting: 
In characteristic zero, the group algebra $kG$ has trivial cohomology and thus
the Koszul resolution of $S$ is sufficient for analyzing the 
Hochschild cohomology of  $S\# G$
(by a spectral sequence argument, see \cite{Farinati}).
But in positive characteristic, the group algebra may exhibit nontrivial cohomology
influencing the Hochschild cohomology of $S\# G$, and one seeks a more sophisticated
replacement for the Koszul resolution of $S$ which nevertheless remains practical
for determining concrete results (such as 
establishing a Poincar\'e-Birkhoff-Witt property).  
In this article, we highlight the complex $X_{\DOT}$
as a first tool in understanding the Hochschild cohomology
of $S\# G$ in arbitrary characteristic and its deformation theory. 
Our general construction of $X_{\DOT}$ in Section \ref{sec:resolution} takes
as input arbitrary resolutions of $S$ and of $kG$ satisfying some 
natural conditions;
the Koszul resolution of $S$ (in case $S$ is Koszul) and the bar resolution
of $kG$ are but two of the potentially many useful choices. 
Our approach may lead to better understanding of many algebras
currently of interest, such as rational Cherednik algebras in positive
characteristic.
In Section~\ref{DOAs} we illustrate these applications by focusing
on a collection of such algebras, showing how known results in
the modular setting may now
be obtained directly from the methods of Braverman and Gaitsgory.

There are several  papers containing generalizations of Koszul
algebras for various purposes, those most relevant to our setting
being~\cite{GRS,Liping,Woodcock}. 
We will not need these here, however, as the known properties of Koszul algebras
over fields are sufficient to obtain our results. 
It would be interesting to determine whether it is possible to generalize
the theory of Braverman and Gaitsgory more directly by 
using some equivalent definition of Koszul rings over
(nonsemisimple) rings involving a bimodule complex (cf.\ 
\cite[Proposition A.2(b)]{BG}).

Throughout this article, $k$ denotes a field (of arbitrary 
characteristic)
and tensor symbols without subscript denote tensor product over $k$: 
$\otimes = \otimes_k$.  (Tensor products over other rings will always
be indicated). We assume all $k$-algebras have unity
and all modules are left modules, unless otherwise specified.
We use the notation $\NN=\ZZ_{\geq0}$, the
nonnegative integers.

\section{Filtered and homogeneous quadratic algebras}

Quadratic algebras and their variations
traditionally arise from taking a free algebra
modulo a set of (nonhomogeneous) relations of
degree two.
Examples include symmetric algebras, commutative polynomial rings,
skew/quantum polynomial rings, Weyl algebras,
Clifford and exterior algebras, 
and enveloping algebras of finite dimensional Lie algebras.
But quadratic shape arises 
from any $\NN$-graded algebra modulo
an ideal generated in filtered degree two.
Specifically,
let $T=\bigoplus_m T^m$ be an $\NN$-graded $k$-algebra.
(We have in mind the tensor algebra (i.e., free algebra) 
$T_k(V)$ of a $k$-vector space $V$,
or the tensor algebra $T_B(U)$ of a $B$-bimodule $U$ 
over a $k$-algebra $B$, for
example, a group algebra.)
Let $F^m(T) = T^0\oplus T^1 \oplus \ldots \oplus T^m$ 
be the $m$-th filtered component
of $T$ and
fix a two-sided ideal $I$ of $T$.

A quotient $T/I$ is called a 
{\bf filtered quadratic algebra}
(or a {\em nonhomogeneous quadratic algebra})
if $I$ can be generated in filtered degree 2, i.e., if
$I=\langle P \rangle$ for some $P\subset F^2(T)$.
In this case, $P$ is called a set of
{\bf filtered quadratic relations}.
Quadratic algebras are filtered algebras, with $m$-th filtered component
$F^m(T/I) = (F^m(T)+I)/I$ induced from that on $T$.

When a set of filtered quadratic relations $R$ resides exactly in
the second graded component (homogeneous of degree 2),
i.e., $R\subset T^2$, 
we call the quotient 
$T/\langle R \rangle$ 
a {\bf homogeneous quadratic algebra}.  
Homogeneous quadratic algebras are not only filtered, but
also graded: 
$T/I=\oplus_m (T^m+I)/I$
for $I=\langle R \rangle$.

One may easily associate to every filtered quadratic
algebra $T/I$ two different graded versions.
We might cross out 
lower order terms in a {\em generating}
set of relations for the algebra, or,
instead, cross out lower order terms 
in each element of the {\em entire} ideal of relations.
The Poincar\'e-Birkhoff-Witt conditions are precisely those 
under which these two 
graded versions of the original algebra 
coincide, as we now recall.

On one hand, we may 
simply ignore those parts of each defining relation in $P$
of degree less than two to obtain a simplified version of the original 
filtered algebra
which is homogeneous quadratic.
Formally, we let $\pi$ denote projection onto the
second graded component of the tensor algebra:
$
   \pi:\ T \rightarrow T^2.
$
If $P$ is a set of filtered quadratic relations defining
the algebra $T/I$,
then $\pi(P)$ defines a graded quadratic algebra
$$T/\langle \pi(P) \rangle\ ,$$ called the
{\bf homogeneous quadratic algebra
determined by $P$} (or the {\em induced
homogeneous quadratic algebra}).
Although this construction depends on the choice of
generators $P$ for $I$,
often a choice can be made 
to capture the degree two aspect of the entire ideal
of relations.  

On the other hand, we may consider the traditional {\bf associated graded
algebra} 
$$\gr\big( T/I\big) 
= \ \bigoplus_{m} \mathcal{F}_m/ \mathcal{F}_{m-1} \, ,
$$
where $\mathcal{F}_m=F^m(T/I)$.
This graded version of the original
algebra does not depend on the choice of generators $P$ of the ideal
$I$ of relations.
We realize the associated graded algebra concretely also as
a homogeneous version of the original filtered quadratic algebra by
projecting each element in the ideal $I$ onto its
leading homogeneous part and taking
the quotient by the resulting ideal:
The
{\bf associated homogeneous quadratic algebra}
(also called the {\em leading homogeneous algebra})
is defined as
$$T/\langle \LH(I)\rangle\, , $$ 
where
$\LH(I)=\{\LH(f): f \in I\}$ and $\LH(f)$ picks off the highest
homogeneous part of $f$ in $T$. (For $f=\sum_{i=1}^d f_i$ with 
each $f_i$ in $T^i$ and $f_d$ nonzero, $\LH(f)=f_d$, and $\LH(0)=0$.)
This associated homogeneous quadratic algebra 
is isomorphic as a graded
$k$-algebra to the associated graded algebra
(see Li~\cite[Theorem 3.2]{Li2012}):
$$
\gr \big(T/I\big)
\ \cong\
T/ \langle \LH(I)\rangle\, .
$$

We say the original filtered quadratic algebra
has {\em Poincar\'e-Birkhoff-Witt type} 
when the associated homogeneous quadratic algebra
 and the homogeneous quadratic algebra determined by $P$
coincide, and thus both give the associated graded algebra.
(This terminology
arises in analogy with the original PBW Theorem
for universal enveloping algebras of Lie algebras.)  More precisely, 
let 
$$\phi: T \rightarrow \gr(T/I)$$
be the natural $k$-algebra epimorphism.
Then (again, see Li~\cite[Theorem 3.2]{Li2012})
$$\ker \phi = \langle \LH(I) \rangle\, , $$
which contains $\langle \pi(P) \rangle$
when $I=\langle P \rangle$.
Thus, a natural surjection $p$ always arises from the
homogeneous quadratic algebra determined by $P$
to the associated graded algebra of the filtered quadratic algebra:
$$p:\ T /\langle \pi(P) \rangle 
\rightarrow 
\gr \big(T /\langle P \rangle\big)\ .$$ 
A filtered quadratic algebra exhibits {\bf PBW type}
(with respect to $P$)
exactly when it can be written as $T /\langle P\rangle$
for some set of filtered quadratic relations $P$
with $p$ an isomorphism of graded algebras,
i.e., when
$$\langle \pi(P)\rangle = \ker \phi = \langle \LH(I)\rangle
$$
and thus the homogeneous versions of the filtered quadratic algebra
all coincide: 
$$
T /\langle \pi(P) \rangle 
\cong
\gr \big(T /\langle P \rangle\big)\ 
\cong
T /\langle \LH \langle P\rangle \rangle \,
.$$
In this case, we say that $P$ is a set of {\bf PBW generating relations}.

The definition of PBW type depends on $P$, as we see in Example~\ref{cuteexample} 
below. 
But if $T^0=k$, 
we may require (without loss of generality) that $P$ be a
$k$-subspace, in which case a set of PBW generating relations
$P$ is unique.  
In fact, a set of PBW generating relations is always 
unique up to additive closure
over the degree zero component of $T$, as we explain in the next
proposition:

\begin{prop}\label{uniquePBWrelations}
PBW filtered quadratic algebras have unique PBW 
filtered quadratic relations up to addition and multiplication by
degree zero elements:
If $P,P'$ are each PBW filtered quadratic relations defining the same
filtered quadratic algebra $T/I$ (that is,
$\langle P \rangle = I = \langle P' \rangle$),
then $P$ and $P'$ generate the same $T^0$-bimodule.
Thus, if both $P$ and $P'$ are closed under addition 
and under multiplication by degree zero elements in $T$, then $P=P'$.
\end{prop}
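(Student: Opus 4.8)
The plan is to prove the sharper statement that, for \emph{any} set $P$ of PBW filtered quadratic relations defining $T/I$, the $T^0$-bimodule it generates --- namely $T^0 P\, T^0 := \Span_k\{a p b : a,b\in T^0,\ p\in P\}$ --- is precisely $I\cap F^2(T)$. Since $I\cap F^2(T)$ is intrinsic to $T/I$, this at once yields $T^0 P\, T^0 = T^0 P' T^0$ for two such sets $P,P'$, which is the first assertion; the last sentence then follows because a subset closed under addition and under left and right multiplication by degree-zero elements is already a $T^0$-sub-bimodule (recall $k\subseteq T^0$, so additive inverses are present), hence coincides with the bimodule it generates. The inclusion $T^0 P\, T^0 \subseteq I\cap F^2(T)$ is immediate from $P\subseteq I\cap F^2(T)$ and $T^0\cdot F^2(T)\cdot T^0\subseteq F^2(T)$ (since $T^0$ sits in graded degree $0$), so the work is all in the reverse inclusion, and that is where the PBW hypothesis enters.

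First I would extract the consequences of the PBW condition through the leading-term ideal $N:=\langle\pi(P)\rangle$. By definition of PBW type, $N=\langle\LH(I)\rangle$, so by Li~\cite[Theorem 3.2]{Li2012} the epimorphism $\phi$ induces a \emph{graded} isomorphism $T/N\cong\gr(T/I)$. Because $\pi(P)\subseteq T^2$, the degree-$m$ homogeneous component of $N$ is $\sum_{a+b=m-2}T^a\,\pi(P)\,T^b$; this vanishes for $m=0$ and $m=1$, and for $m=2$ it equals $T^0\pi(P)T^0=\pi(T^0 P\, T^0)$, using that $\pi(apb)=a\,\pi(p)\,b$ for $a,b\in T^0$. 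Comparing $T/N\cong\gr(T/I)$ in each graded degree via $\phi$, and unwinding $\phi$ through the identifications $\gr_m(T/I)=F^m(T/I)/F^{m-1}(T/I)$ and $F^m(T/I)=(F^m(T)+I)/I$, one gets $\ker\big(\phi|_{T^m}\big)=\pi_m\big(I\cap F^m(T)\big)$ for the graded projection $\pi_m\colon F^m(T)\to T^m$. The case $m=0$ gives $I\cap T^0=0$; then the case $m=1$ gives $I\cap F^1(T)\subseteq\ker\pi_1=F^0(T)$, hence $I\cap F^1(T)=0$; and the case $m=2$ gives $\pi\big(I\cap F^2(T)\big)=N\cap T^2=\pi(T^0 P\, T^0)$.

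With these facts the reverse inclusion is immediate: given $g\in I\cap F^2(T)$, choose $q\in T^0 P\, T^0$ with $\pi(q)=\pi(g)$; then $g-q$ lies in $I$ and in $\ker\big(\pi|_{F^2(T)}\big)=F^1(T)$, hence in $I\cap F^1(T)=0$, so $g=q\in T^0 P\, T^0$. This proves $I\cap F^2(T)=T^0 P\, T^0$, and by symmetry $I\cap F^2(T)=T^0 P' T^0$, completing the argument.

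The proof is short, and the one spot demanding care --- what I would call the main obstacle --- is the passage from the graded isomorphism $T/N\cong\gr(T/I)$ to statements about the intersections $I\cap F^m(T)$: one must track the filtration on $T/I$ against the grading on $T$ through $\phi$, and verify that the degree-$2$ part of $N$ is \emph{exactly} $T^0\pi(P)T^0$ rather than something larger, which is precisely where $\pi(P)\subseteq T^2$ is used. Once the PBW hypothesis has been spent to replace $\gr(T/I)$ by the concrete homogeneous algebra $T/N$, no new ideas are needed.
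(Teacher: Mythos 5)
Your proposal is correct and follows essentially the same route as the paper: both proofs reduce the statement to the two identities $I\cap F^1(T)=\{0\}$ and $I\cap F^2(T)=T^0PT^0$, obtained by comparing the degree $\le 2$ part of $\langle\pi(P)\rangle=\ker\phi$ with $\pi_m(I\cap F^m(T))$ and then lifting along $\pi$. The only difference is presentational (you argue directly via the graded components of $\ker\phi$ where the paper argues by contradiction), so no further comparison is needed.
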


\begin{proof}
We check that for any set $P$ of PBW relations, 
\begin{equation}\label{rawIJ}
\langle P\rangle\cap F^1(T)= \{0\} \ \ \ \text{and} \ \ \ 
\langle P\rangle\cap F^2(T)= T^0 P T^0,
\end{equation}
where $T^0PT^0$ is the $T^0$-bimodule generated by $P$.
The first claim is immediate:
Since $\pi(P)\subset T^2$, the algebra $T/\langle \pi(P) \rangle$
contains $F^1(T)$ as a subspace;
if $\langle P\rangle \cap F^1(T)\neq \{0\}$, 
then $\gr(T/\langle P \rangle)$ does not contain
$F^1(T)$ as a subspace, and so $p$ cannot be an isomorphism.

Now let $x$ be any element of $\langle P \rangle \cap F^2(T)$
with $x\not\in T^0PT^0$. 
Note that for all $y\in T^0PT^0$, $\pi(x)\neq \pi(y)$.
Otherwise, some nonzero $x-y$ would lie in $F^1(T)$ (as $\pi(x-y)=0$), 
implying that $\langle P \rangle \cap F^1(T)$
is nonzero.
Hence $\pi(x)\not\in \pi(T^0PT^0)=T^0\pi(P)T^0$.  
But this contradicts the PBW property, which implies that
$\pi(x)\in\langle\text{LH}\langle P \rangle \rangle=\langle \pi(P) \rangle$
(a homogeneous ideal) and hence $\pi(x)\in T^0\pi(P)T^0$.

Lastly, if $P$ and $P'$ are both PBW filtered quadratic relations
generating the ideal $I$, then
$
T^0PT^0=I\cap F^2(T)=T^0P'T^0$,
and $P$ and $P'$ generate the same $T^0$-bimodule.
\end{proof}

\begin{example}\label{cuteexample}{\em 
We give an example of a filtered quadratic algebra that exhibits PBW type
with respect to one generating set of relations but not another.
Let $V=\Span_k\{x,y\}$ and let $I$ be the two-sided ideal
in the tensor algebra $T=T(V)$ (over $k$) 
generated by $P=\Span_k\{xy-x,yx-y\}$ (we suppress
tensor signs in $T$).
The filtered quadratic algebra $T(V)/I = T(V)/\langle P \rangle$
is not of PBW type with respect to $P$.  Indeed, the
graded summand of the associated graded algebra of degree 2 has dimension
0 over $k$ since a quick calculation verifies that
$x^2-x$ and $y^2-y$ both lie in the ideal $\langle P \rangle$,
implying that all of $T^2(V)=V\ot V$ 
projects to zero in $T(V)/\langle P \rangle$
after passing to the associated graded algebra.
But $\pi(P)=\Span_k\{xy,yx\}\subset V\otimes V$
defines a homogeneous quadratic algebra $T(V)/\langle xy,yx \rangle$
whose graded summand of degree 2 has dimension 2 over $k$
(with basis $\{x^2, y^2\}$).
Note that the filtered quadratic algebra
$T(V)/I$ exhibits PBW type with respect to a different set of generators
for the ideal $I$.  If we extend $P$ to 
$$P'=\Span_k
\{xy-x, yx-y,x^2-x, y^2-y  \}\, ,$$ then
$T(V)/I$ is of PBW type with respect to $P'$,
as its associated graded algebra is isomorphic
to
$$T(V)/\langle xy,yx,x^2,y^2\rangle \cong T(V)/\langle \pi(P')\rangle \ .$$
}\end{example}

\quad

In Section~\ref{sec:main} we will analyze 
filtered and homogeneous quadratic algebras
when $T$ is a free algebra.  Traditional quadratic algebras
arise as quotients of a 
tensor algebra $T$ of a $k$-vector space $V$.  
We expand this view and consider (more generally) bimodules 
$V$ over an arbitrary $k$-algebra $B$
in order to include constructions of algebras of quadratic shape 
naturally appearing in other settings.  
First we recall 
Hochschild cohomology and deformations in the next section,
and construct the needed resolution $X_{\DOT}$ in Section~\ref{sec:resolution}.

\section{Deformations and  Koszul algebras}\label{sec:relative}

Let $A$ be a $k$-algebra. 
Let $M$ be an $A$-bimodule, equivalently, 
an $A^e$-module, where $A^e=A\ot A^{op}$,
the enveloping algebra of $A$.
As $k$ is a field, the   Hochschild cohomology of $M$ is
$$
  \HH^n (A,M) = \Ext^n_{A^e}(A,M).
$$
This cohomology can be examined explicitly using
the  bar resolution, a free resolution
of the $A^e$-module $A$: 
\begin{equation}\label{relative-bar}
 \cdots \stackrel{\delta_3}{\longrightarrow} A\ot A\ot A\ot A
  \stackrel{\delta_2}{\longrightarrow} A\ot A\ot A
  \stackrel{\delta_1}{\longrightarrow} A\ot A \stackrel{\delta_0}{\longrightarrow}
    A\rightarrow 0 
\end{equation}
where 
\begin{equation}\label{eqn: bar-differential}
   \delta_n (a_0\ot \cdots\ot a_{n+1}) = \sum_{i=0}^n (-1)^i a_0\ot \cdots
      \ot a_i a_{i+1}\ot \cdots\ot a_{n+1}
\end{equation}
for all $n\geq 0$ and $a_0,\ldots,a_{n+1}$ in $A$. If
$M= A$, we abbreviate,
$
   \HH^n(A) := \HH^n(A,A).
$

A {\bf deformation of $A$  over $k[t]$} is an associative
$k[t]$-algebra with underlying vector space $A[t]$ and multiplication
determined by
\begin{equation}\label{star-formula}
    a_1 * a_2 = a_1a_2 + \mu_1(a_1\ot a_2) t + \mu_2(a_1\ot a_2) t^2 + \cdots
\end{equation}
where $a_1a_2$ is the product of $a_1$ and $a_2$ in $A$
and each $\mu_k: A\ot A \rightarrow A$ 
is a $k$-linear map.
(Only finitely many terms are nonzero 
for each pair $a_1, a_2$ in the above expansion.)

We record some needed properties of $\mu_1$ and $\mu_2$. 
Note that
$
  \Hom_{k}(A\ot  A, A)\cong \Hom_{A^e}(A\ot A \ot A\ot  A, A)
$
since the $A^e$-module $A\ot A \ot A\ot  A$ is (tensor) induced from the
$k$-module $A\ot A$,
and we identify $\mu_1$ with a  2-cochain
on the  bar resolution~(\ref{relative-bar}).
Associativity of $*$ implies 
that $\mu_1$ is a Hochschild 
2-cocycle,  i.e., that
\begin{equation}\label{eqn:Hoch2cocycle}
  a_1\mu_1(a_2\ot a_3) + \mu_1(a_1\ot a_2a_3) =
     \mu_1(a_1a_2\ot a_3) + \mu_1(a_1\ot a_2)a_3
\end{equation}
for all $a_1,a_2,a_3\in A$, 
or, equivalently, that $\delta_3^*(\mu_1)=0$:
One need only expand each side of the equation
$
   a_1 * (a_2 * a_3) = (a_1*a_2)*a_3
$
and compare coefficients of $t$.
Comparing coefficients of $t^2$ instead yields
$$
  \delta_3^*(\mu_2)(a_1\ot a_2\ot a_3) = \mu_1(a_1\ot\mu_1(a_2\ot a_3))
                    -\mu_1(\mu_1(a_1\ot a_2)\ot a_3)
$$
for all $a_1, a_2, a_3\in A$. Thus we consider $\mu_2$ to be a 
cochain on the bar resolution whose coboundary is given as above.
Generally, for all $i\geq 1$, 
\begin{equation}\label{delta3}
  \delta_3^*(\mu_i)(a_1\ot a_2\ot a_3) = 
   \sum_{j=1}^{i-1} \mu_j (\mu_{i-j}(a_1\ot a_2)\ot a_3)
     -\mu_j(a_1\ot \mu_{i-j} (a_2\ot a_3)).
\end{equation}
We call the right side of the last equation
the {\bf $(i-1)$-th obstruction}.

Now we recall graded deformations. 
Assume that the algebra $A$ is $\NN$-graded. 
Let $t$ be an indeterminate and extend the grading
to $A[t]$ by assigning $\deg t = 1$.
A {\bf graded deformation of $A$ over $k[t]$}
is a deformation $A_t$ of $A$ which is also graded;
each map
$\mu_j:A\otimes A \rightarrow A$ is necessarily homogeneous of
degree $-j$ in this case.
An {\bf $i$-th level graded deformation of $A$} 
is a graded
associative $k[t]/(t^{i+1})$-algebra $A_i$ 
whose underlying vector space is $A[t]/(t^{i+1})$ and whose
multiplication is determined by 
$$
  a_1*a_2 = a_1a_2 + \mu_1(a_1\ot a_2) t + \mu_2(a_1\ot a_2)t^2 + \cdots
              + \mu_i(a_1\ot a_2)t^i
$$
for some maps
$\mu_j:A\ot A\rightarrow A$
extended to be linear over $k[t]/(t^{i+1})$. 
We call  $\mu_j$ the $j$-th {\bf multiplication map} of the deformation
$A_i$; note that it is homogeneous of degree $-j$ as 
$A_i$ is graded.
We say that an $(i+1)$-st level graded deformation $A_{i+1}$
of $A$ 
{\bf extends} (or {\em lifts})
an $i$-th level graded deformation $A_i$ of $A$
if the $j$-th multiplication maps agree for all $j\leq i$.

For any $\NN$-graded $A$-bimodule $M$, 
the bar resolution~(\ref{relative-bar})  
induces a grading on Hochschild cohomology
$\HH^n(A,M)$ 
in the following way: Let
$$\Hom^i_{k}(A^{\ot n},M)$$ be the space of all homogeneous $k$-linear maps
from $A^{\ot n}$ to $M$ of degree $i$, i.e., 
mapping the degree $j$ component of $A^{\ot n}$ (for any $j$) 
to the degree $j+i$ component of $M$,
where $\deg(a_1\ot\cdots\ot a_n)$ in
$A^{\ot n}$ is $\deg a_1+\ldots+ \deg a_n$
for $a_1,\ldots, a_n$ homogeneous in $A$.
The differential on the bar complex~(\ref{relative-bar})
has degree zero, so the spaces of coboundaries and cocycles
inherit this grading.
We denote the resulting $i$-th graded component of $\HH^n(A,M)$ by
 $\HH^{n,i}(A,M)$, so that 
$$\HH^{n}(A,M) = \bigoplus_{i} \HH^{n,i}(A,M)\ .$$

The following proposition is a consequence of~\cite[Proposition~1.5]{BG}. 

\begin{prop}\label{lemma:obstruction}
First level graded deformations of the algebra $A$
define elements of the Hochschild
cohomology space $\HH^{2,-1}(A)$. 
Two such deformations are (graded) isomorphic if, 
and only if, the corresponding
cocycles are cohomologous. 
All obstructions to lifting an $(i-1)$-th level
graded deformation to the next level
lie in $\HH^{3,-i}(A)$; 
an $(i-1)$-th level deformation lifts to the 
$i$-th level if and only if its
$(i-1)$-th obstruction cocycle
is zero in cohomology.
\end{prop}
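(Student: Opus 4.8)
The plan is to run the standard deformation argument behind~\cite[Proposition~1.5]{BG} on the bar resolution~\eqref{relative-bar}, keeping track of internal degrees throughout; semisimplicity of the ground ring is never used. The only feature beyond~\cite{BG} is the observation that, since $\deg t = 1$ and the star product~\eqref{star-formula} of an $i$-th level graded deformation $A_i$ is homogeneous of degree zero, each multiplication map $\mu_j$ must be homogeneous of degree $-j$: for homogeneous $a_1, a_2$ we have $\deg\mu_j(a_1\ot a_2) + j = \deg(a_1 a_2) = \deg a_1 + \deg a_2$. Hence each cochain produced below lies in the homogeneous piece $\Hom^{-j}_{k}(A^{\ot n},A)$, and since the bar differential has degree zero these pieces fit together into $\HH^n(A) = \bigoplus_i \HH^{n,i}(A)$.

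For the first two claims, the computation preceding the proposition already shows $\mu_1$ satisfies the $2$-cocycle identity~\eqref{eqn:Hoch2cocycle}, i.e.\ $\delta_3^*(\mu_1)=0$, and by the degree observation $\mu_1\in\Hom^{-1}_{k}(A^{\ot 2},A)$, so it defines a class $[\mu_1]\in\HH^{2,-1}(A)$. For the isomorphism criterion, a graded $k[t]/(t^2)$-algebra isomorphism $f\colon A_1\to A_1'$ reducing to the identity modulo $t$ must have the form $f = \id + f_1 t$ with $f_1\in\Hom^{-1}_{k}(A,A)$; expanding $f(a_1 * a_2) = f(a_1) *' f(a_2)$ and comparing coefficients of $t$ yields exactly $\mu_1 - \mu_1' = \delta_2^*(f_1)$, so $[\mu_1]=[\mu_1']$ in $\HH^{2,-1}(A)$, and conversely any homogeneous degree $-1$ cochain $f_1$ with $\mu_1-\mu_1'=\delta_2^*(f_1)$ defines such an isomorphism $\id + f_1 t$.

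For the lifting statement, let $A_{i-1}$ be an $(i-1)$-th level graded deformation with maps $\mu_1,\dots,\mu_{i-1}$, and try to extend it to level $i$ by a map $\mu_i$. Collecting the coefficient of $t^i$ in $(a_1 * a_2)* a_3 = a_1 *(a_2 * a_3)$ shows that a suitable $\mu_i$ exists if and only if $\delta_3^*(\mu_i)$ equals the $(i-1)$-th obstruction $O_{i-1}$ of~\eqref{delta3}. Each summand of $O_{i-1}$ has degree $-j - (i-j) = -i$, so $O_{i-1}\in\Hom^{-i}_{k}(A^{\ot 3},A)$; moreover $O_{i-1}$ is a $3$-cocycle, $\delta_4^*(O_{i-1})=0$, which one deduces from the partial associativity relations among $\mu_1,\dots,\mu_{i-1}$ (that is, from the associativity of $A_{i-1}$). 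Thus $[O_{i-1}]\in\HH^{3,-i}(A)$. Since $\delta_3^*$ preserves degree, $O_{i-1}$ lies in its image exactly when it is the coboundary of some homogeneous degree $-i$ cochain---the degree $-i$ component of any preimage---which one then takes as $\mu_i$ to build a graded, hence $i$-th level graded, deformation $A_i$. So $A_{i-1}$ lifts to level $i$ if and only if $[O_{i-1}]=0$ in $\HH^{3,-i}(A)$. I expect the one genuinely computational point to be the cocycle identity $\delta_4^*(O_{i-1})=0$, a standard but somewhat tedious manipulation of the lower-order associativity constraints; the rest is the degree bookkeeping above.
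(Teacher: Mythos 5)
Your argument is correct and is precisely the standard Gerstenhaber-style computation with internal-degree bookkeeping that the paper itself invokes without proof by citing \cite[Proposition~1.5]{BG}. The one step you defer, $\delta_4^*(O_{i-1})=0$, is indeed the standard consequence of the associativity of $A_{i-1}$, so nothing essential is missing.
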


We are most interested in deformations of skew group algebras formed
from Koszul algebras with group actions, and we
recall Koszul algebras (over the field $k$) next. 
Let $V$ be a finite dimensional vector space.
Let
$$
  T_k(V) = k\oplus V\oplus (V\ot V) \oplus 
(V\ot V\ot V) \oplus\ldots ,
$$
the tensor algebra of $V$ 
over $k$ with $i$-th graded piece $T_k^i(V) := V^{\ot  i}$
and $T_k^0(V)=k$.

Let $R$ be a subspace of $T_k^2(V)$, and let 
$S=T_k(V)/\langle R \rangle$ be the corresponding homogeneous quadratic
algebra.
Let $K^0(S) = k$, $K^1(S) = V$, and for each $n\geq 2$,
define
$$
   K^n(S) = \bigcap_{j=0}^{n-2} ( V^{\ot j}\ot  R\ot V^{\ot (n-2-j)}).
$$
Set $$\widetilde{K}^n(S) = S\ot  K^n(S)\ot  S\ .$$
Then $\widetilde{K}^{0}(S)\cong S\otimes S$,
$\widetilde{K}^{1}(S)= S\otimes V\otimes S$,
$\widetilde{K}^{2}(S)= S\otimes R\otimes S$,
and, in general, $\widetilde{K}^n(S)$ embeds
as an $S^e$-submodule 
into $S^{\ot (n+2)}$, the $n$-th component of the
bar resolution~(\ref{relative-bar}).
We apply the differential $\delta_n$, defined in~(\ref{eqn: bar-differential}),
to an element in $\widetilde{K}^n(S)$ and note that
the terms indexed by $i=1,2,\ldots, n-1$ vanish
(due to the factors in the space of relations $R$).
The remaining terms are clearly in $\widetilde{K}^{n-1}(S)$. 
We may thus consider $\widetilde{K}^{\DOT}(S)$ to be a complex with
differential $d$ (restricted from the bar complex, 
$d_n:= \delta_n |_{{\widetilde{K}}^n(S)}$),
called the {\bf Koszul complex}:
\begin{equation}
\label{relativeKoszul}
\ldots\stackrel{d_3}{\longrightarrow}
\widetilde{K}^{2}(S)\stackrel{d_2}{\longrightarrow}
\widetilde{K}^{1}(S) \stackrel{d_1}{\longrightarrow}
\widetilde{K}^{0}(S)\stackrel{d_0}{\longrightarrow} 
 S\rightarrow 0\ .
\end{equation}
By definition, $S$ is a {\bf Koszul algebra} if the related complex
$\bar{K}^{\DOT}(S)$, defined by $\bar{K}^n(S):= S\ot K^n(S)$, is a
resolution of the $S$-module $k$ on which each element of $V$
acts as 0. 
(Note that $\bar{K}^n(S)\cong \widetilde{K}^n(S)\ot_S k$;
differentials are $d_n\ot \id$.)
It is well-known that $S$ is a Koszul algebra if, and only if, 
$\widetilde{K}^{\DOT}(S)$ is a resolution of the $S^e$-module $S$.
(See e.g.~\cite[Proposition A.2]{BG} or~\cite{Kr}.)


\section{Resolutions for skew group algebras}\label{sec:resolution}

In this section, we consider any finite group $G$ and any $k$-algebra
$S$ upon which $G$ acts by automorphisms.  We work in
arbitrary characteristic to develop techniques helpful in the modular setting.
We explain how to construct a resolution of the
skew group algebra $A=S\# G$ from resolutions of $S$ and of $kG$,
generalizing a construction of
Guccione, Guccione, and Valqui~\cite[\S4.1]{GGV}.
It should be compared to the double complexes in 
\cite[\S2.2]{Sanada} and more generally in \cite[Corollary 3.4]{Stefan}
that are used to build  spectral sequences.
We apply the resolution 
in the next section to generalize the result of 
Braverman and Gaitsgory~\cite[Theorem~4.1]{BG} on the
Poincar\'e-Birkhoff-Witt property.
Some of our assumptions in this section 
will seem restrictive, however the
large class of examples to which we generalize their result
in Theorem~\ref{thm:main}
all satisfy the assumptions, as do the modular versions of 
Drinfeld orbifold algebras, graded Hecke algebras, and symplectic
reflection algebras in Section~\ref{DOAs}.

In characteristic zero, 
$\HH^{\DOT}(S\# G)\cong\HH^{\DOT} (S,S\# G)^G$ as a 
consequence of a spectral sequence argument, and the latter
may be obtained from a resolution of $S$.
(In the special case that $S$ is Koszul, one thus has a convenient resolution at 
hand for analyzing the cohomology of $S\# G$ and its deformation theory.)
But in the modular setting, when the characteristic of $k$
divides the order of $G$, the spectral sequence no longer 
merely produces
$G$-invariants  (as the group may have nontrivial
cohomology) and thus this technique for simplifying the cohomology
of the skew group algebra fails.  
The resolution of $S\# G$ constructed in this section  
retains some of the flavor of the Koszul
resolution of $S$, so as to allow similar Koszul techniques to be applied in
the modular setting. 

We first recall the definition of a {skew group algebra}.
The {\bf skew group algebra} $S\# G$ is a semidirect 
product algebra: It is the $k$-vector space $S\otimes kG$
together with multiplication given by 
$(r \otimes g)(s\otimes h)=r ( \,^{g}s)\otimes g h$
for all $r,s$ in $S$ and $g,h$ in $G$,
where $^gs$ is the image of $s$ under the automorphism $g$.
(We are most interested in skew group algebras arising
from the linear action of $G$ 
on a finite dimensional vector space $V$ and 
its induced actions on 
tensor algebras, symmetric algebras, Koszul algebras, and
homogeneous quadratic algebras all generated
by $V$.)

We need the notion of Yetter-Drinfeld modules:
A vector space $V$ is a {\bf Yetter-Drinfeld module} over $G$ if $V$ is
$G$-graded, that is, $V=\oplus_{g\in G}V_g$, and $V$ is a $kG$-module 
with $h(V_g)= V_{hgh^{-1}}$ for all $g,h\in G$.
Any $kG$-module $V$ is trivially a Yetter-Drinfeld module
by letting $V_1=V$ and $V_g=0$ for all nonidentity group elements $g$.
Similarly, any algebra $S$ on which $G$ acts by automorphisms 
is a Yetter-Drinfeld module in this way.
Alternatively, the group algebra $kG$ itself may be considered to be
a Yetter-Drinfeld module by
letting the $g$-component be all scalar multiples of $g$, for each
$g\in G$, and by letting $G$ act on $kG$ by conjugation.
The skew group algebra $A=S\# G$ is a Yetter-Drinfeld module by
combining these two structures:
$$
   A=\bigoplus_{g\in G} A_g
$$
where $A_g = Sg$ and $G$ acts on $A$ by conjugation (an inner action).
A morphism of Yetter-Drinfeld modules is  a $kG$-module homomorphism that preserves 
the $G$-grading. 
We use a  braiding on the category of Yetter-Drinfeld modules:
Given any two Yetter-Drinfeld modules $V, W$ over $G$, their tensor 
product $V\ot W$ is again a Yetter-Drinfeld module with 
$$
    (V\ot W)_g = \bigoplus_{xy=g} (V_x\ot W_y)
$$
and the usual $G$-action on a tensor product:
$
    {}^g(v\ot w)= \,{}^gv\ot\,  {}^gw
$
for all $g\in G$, $v\in V$, $w\in W$. 
There is an isomorphism of Yetter-Drinfeld modules
$$c_{V,W}: V\ot W\rightarrow W\ot V$$
given by $c_{V,W}(v\ot w) = {}^g w\ot v$ for all $v\in V_g$,
$w\in W$, $g\in G$, called the {\bf braiding}.

We will now construct an $A^e$-free resolution
of $A=S\# G$ from a $(kG)^e$-free resolution of $kG$ and an $S^e$-free
resolution of $S$.
First let 
$$
  \cdots\rightarrow C_1 \rightarrow C_0\rightarrow kG\rightarrow 0
$$
be a $(kG)^e$-free resolution of $kG$.
We assume that  each $C_i$ is $G$-graded with grading preserved by
the bimodule structure, that is $g((C_i)_h)_{l} = (C_i)_{ghl}$ for
all $g,h,l$ in $G$. Note this implies that $C_i$ is a 
Yetter-Drinfeld module where $ {}^g c = gcg^{-1}$ for all $g$ in $G$
and $c$ in $C_i$. 
We also assume that the differentials preserve the $G$-grading. 
Assume that as a free $(kG)^e$-module, $C_i = kG\ot C_i'\ot kG$ for
a $G$-graded vector space $C_i'$ whose $G$-grading induces that on
$C_i$ under the usual tensor product of $G$-graded vector spaces. 
For example, 
the bar resolution of $kG$ satisfies all these properties.
Another instance can be found in Example~\ref{example:cyclic} below.

Next let 
$$
   \cdots\rightarrow D_1\rightarrow D_0\rightarrow S\rightarrow 0
$$
be an $S^e$-free resolution of $S$ consisting of left $kG$-modules for which
the differentials are $kG$-module homomorphisms, and the left actions of $S$
and of $kG$ are compatible in the sense that they induce a left action
of $A=S\# G$. 
We  consider each
$D_i$ to be a Yetter-Drinfeld module by setting it all in the 
component of the identity. 
Write the free $S^e$-module $D_i$ as $S\ot D_i'\ot S$, for
a vector space $D_i'$. 
For example, the bar resolution of $S$ satisfies these properties
under the usual action of $G$ on tensor products. 
If  $S$ is a Koszul algebra on which $G$ acts by graded automorphisms, 
the Koszul resolution of $S$ also satisfies these properties. 

We now induce both $C_{\DOT}$ and $D_{\DOT}$ to $A$ 
by tensoring with $A$ in each degree.
We induce $C_{\DOT}$ from $kG$ to $A$ on the left:
Since $A$ is free as a right $kG$-module under multiplication, and $A\ot _{kG}
kG\cong A$, we obtain an exact sequence of $A\ot (kG)^{op}$-modules, 
$$
   \cdots\rightarrow A\ot_{kG} C_1\rightarrow A\ot_{kG}C_0\rightarrow
      A\rightarrow 0 . 
$$
Similarly, we induce $D_{\DOT}$ from $S$ to $A$ on the right:
Since $A$ is free as a left $S$-module, and $S\ot_S A\cong A$,
$$
  \cdots\rightarrow D_1\ot_S A\rightarrow D_0\ot_S A\rightarrow
   A\rightarrow 0
$$
is an exact sequence of $S\ot A^{op}$-modules.

We extend the actions on the modules in each of these two
sequences so that they become sequences of $A^e$-modules.
This will allow us to take their tensor product over $A$.
We extend the right $kG$-module structure on $A\ot_{kG} C_{\DOT}$ 
to a right $A$-module structure by using the braiding to 
define a right action of $S$:
For all $a\in A$, $g\in G$, $x\in (C_i)_g$, $s\in S$, we set
$$
    (a\ot x) \cdot s := a ({}^gs)\ot x .
$$
We combine this right action of $S$ with the right action of $kG$;
under our assumptions, this results in a right action of $A$ on
$A\ot_{kG} C_i$. To see that it is well-defined, note that  
if $h\in G$, then 
$(ah\ot x)\cdot s = ah ({}^gs) \ot x = a ({}^{hg}s)\ot hx = (a\ot hx)\cdot s$.
Thus $A\ot_{kG} C_i$ is an $A$-bimodule
and the action commutes  with the differentials 
by the assumption that the differentials on $C_{\DOT}$ 
preserve the $G$-grading.

We extend the left $S$-module structure on $D_i\ot_S A$ 
to a left $A$-module structure by defining a left action of
$kG$:
$$
    g\cdot (y\ot a) := {}^g y\ot ga
$$
for all $g\in G$, $y\in D_i$, $a\in A$. 
It is well-defined since $gs = (\,{}^gs)g$ 
for all $s\in S$, and indeed gives a left action
of $A$ on $D_i\ot _S A$.
Again this action commutes with the differentials,
by our assumption that the differentials on $D_{\DOT}$ are
$kG$-module homomorphisms. 

We use the $A$-bimodule structures on 
$A\ot_{kG}C_{\DOT}$ and on $D_{\DOT}\ot_S A$ defined above
and consider each as a complex of $A^e$-modules.
(Note that we have {\em not} assumed  they consist of projective 
$A^e$-modules.) 
We take their tensor product over $A$, setting 
$
   X_{\DOT,\DOT} := (A\ot_{kG}C_{\DOT})\ot_A (D_{\DOT}\ot_S A),
$
that is, for all $i,j\geq 0$, 
\begin{equation}\label{xij}
 X_{i,j}:= (A\ot_{kG}C_i)\ot_A (D_j\ot _{S} A),
\end{equation}
with horizontal and vertical differentials
$$
   d^h_{i,j}: X_{i,j}\rightarrow X_{i-1,j} \ \ \ \mbox{ and }
   \ \ \ d^v_{i,j}: X_{i,j}\rightarrow X_{i,j-1}
$$
given by 
$  d^h_{i,j}:= d_i\ot \id$ and 
$d^v_{i,j}:= (-1)^i\, \id\ot d_j$.
Let $X_{\DOT}$ be the total complex of $X_{\DOT, \DOT}$, i.e., the complex
\begin{equation}\label{resolution-X}
  \cdots\rightarrow X_2\rightarrow X_1\rightarrow X_0\rightarrow A\rightarrow 0
\end{equation}
 with $X_n = \oplus_{i+j=n} X_{i,j}$. 

\begin{thm}\label{thm:resolution}
Let $S$ be a $k$-algebra with action of a finite group $G$ by automorphisms
and set $A=S\# G$.
Let $X_{\DOT}$ be the complex defined in~(\ref{resolution-X}) from factors
$C_i= kG\ot C_i'\ot kG$ and $D_i = S\ot D_i'\ot S$ as above. 
Then $X_{\DOT}$  is a free
resolution of the $A^e$-module $A$, and for each $i,j$,
the $A^e$-module $X_{i,j}$ is isomorphic to $A\ot C_i'\ot D_j'\ot A$.
\end{thm}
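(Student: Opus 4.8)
My plan is to prove the two assertions in turn, starting with the explicit description of each bimodule $X_{i,j}$ (which also yields freeness of the $X_n$ over $A^e$) and then deducing exactness of $\cdots\to X_1\to X_0\to A\to0$. To identify $X_{i,j}$, I would unwind the two inductions. Since $C_i=kG\ot C_i'\ot kG$ is free over $(kG)^e$ via its two outer $kG$-factors, inducing on the left gives $A\ot_{kG}C_i\cong A\ot C_i'\ot kG$ with \emph{standard} left $A$-action (multiplication on the leftmost $A$) and right $A$-action assembled from the right $kG$-action and the braided right $S$-action. Dually $D_j\ot_S A\cong S\ot D_j'\ot A$, with \emph{standard} right $A$-action and left $A$-action built from the $kG$-action $g\cdot(y\ot a)={}^gy\ot ga$. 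Hence $X_{i,j}=(A\ot_{kG}C_i)\ot_A(D_j\ot_S A)$ carries the standard left $A$-action (from the first factor) and standard right $A$-action (from the second). I would then show the $A^e$-linear map
$$A\ot C_i'\ot D_j'\ot A\longrightarrow X_{i,j},\qquad a\ot c\ot d\ot a'\longmapsto (a\ot_{kG}(1\ot c\ot1))\ot_A(1\ot d\ot1\ot_S a'),$$
is an isomorphism. Surjectivity comes from repeatedly applying the defining relations of $\ot_{kG}$, $\ot_A$, and $\ot_S$: push the leftmost $kG$-factor of $C_i$ into $A$ on the left; push the rightmost $kG$-factor of $C_i$ across $\ot_A$ (where it twists $D_j$ and $a'$ by the $G$-action); push the leftmost $S$-factor of $D_j$ back across $\ot_A$ (where it acts on $A$ via the braiding); and push the rightmost $S$-factor of $D_j$ into $a'$ on the right. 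Injectivity then follows from a dimension count over $k$ (or from writing down the evident inverse on generators). This gives $X_{i,j}\cong A\ot C_i'\ot D_j'\ot A$ as $A^e$-modules, so each $X_{i,j}$, hence each $X_n=\bigoplus_{i+j=n}X_{i,j}$, is free over $A^e$.

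The key technical input for exactness is that $A\ot_{kG}C_i$ is \emph{free}, hence flat, as a \emph{right} $A$-module (and symmetrically $D_j\ot_S A$ is free as a left $A$-module). Although the right $S$-action is braided, one checks directly: writing $A=S\ot kG$, so that $A\ot_{kG}C_i\cong S\ot kG\ot C_i'\ot kG$, for each basis vector $g_1\ot c$ of $kG\ot C_i'$ with $c$ of $G$-degree $\gamma$, the subspace $S\ot\Span_k\{g_1\ot c\}\ot kG$ is a right $A$-submodule isomorphic to the free module $A$ via $s_0\ot g_1\ot c\ot g_2\mapsto s_0\ot g_1\gamma g_2$; summing over such basis vectors exhibits $A\ot_{kG}C_i$ as a free right $A$-module.

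To prove exactness of $\cdots\to X_1\to X_0\to A\to0$, I would use the spectral sequence of the first-quadrant double complex $X_{\DOT,\DOT}$, taking vertical homology first. For fixed $i$, the $i$-th column augmented by $A\ot_{kG}C_i$ is $(A\ot_{kG}C_i)\ot_A\big(D_{\DOT}\ot_S A\to A\to0\big)$; since $A$ is free (hence flat) as a \emph{left} $S$-module, the complex $D_{\DOT}\ot_S A\to A\to0$ is exact, and since $A\ot_{kG}C_i$ is flat as a \emph{right} $A$-module, this exactness is preserved after $\ot_A$. So $E^1$ is concentrated in the bottom row, with $E^1_{i,0}\cong A\ot_{kG}C_i$ and $d^1$ equal to the differential of $A\ot_{kG}C_{\DOT}$; since $A$ is free (hence flat) as a \emph{right} $kG$-module, $A\ot_{kG}C_{\DOT}\to A\to0$ is exact, so $E^2_{i,0}\cong A$ for $i=0$ and $0$ otherwise. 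The spectral sequence collapses at $E^2$, so the homology of the total complex is $A$ in degree $0$ and $0$ in positive degrees; checking that the degree-$0$ isomorphism is realized by the augmentation $X_0\to A$ then shows $\cdots\to X_1\to X_0\to A\to0$ is exact. Combined with the first paragraph, this shows $X_{\DOT}$ is a free resolution of the $A^e$-module $A$.

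I expect the main obstacle to be the Yetter--Drinfeld/braiding bookkeeping in the first two paragraphs: keeping straight which $G$-twist appears as elements move through the three relative tensor products, and confirming that the twisted right $S$-action on $A\ot_{kG}C_i$ (and the twisted left $kG$-action on $D_j\ot_S A$) really assemble into the untwisted, free module structures claimed. Once that is settled the homological part is routine; note in particular that it never requires $C_i$ or $D_j$ to be projective over $A^e$ --- flatness of $A$ over $kG$ and over $S$, together with the flatness lemma above, is exactly what makes the double-complex argument work.
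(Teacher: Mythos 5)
Your proposal is correct, and the first half --- the identification $X_{i,j}\cong A\ot C_i'\ot D_j'\ot A$ via pushing factors across the relative tensor products, and the explicit exhibition of $A\ot_{kG}C_i$ as a free right $A$-module despite the braided $S$-action --- is essentially identical to the paper's argument (the paper uses the free generators $g\ot x_m\ot 1$ and verifies $(g\ot x_m\ot 1)\cdot({}^{\ell^{-1}g^{-1}}s)h=sg\ot x_m\ot h$, which matches your basis-by-basis identification; do prefer your parenthetical ``evident inverse on generators'' over a dimension count, since $S$ need not be finite dimensional, and note that well-definedness of that inverse is exactly the $A$-balancedness check that constitutes most of the paper's bookkeeping). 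Where you genuinely diverge is the exactness step: the paper invokes the K\"unneth Theorem for complexes, which requires not only that the terms $A\ot_{kG}C_i$ be flat right $A$-modules but also that the images of the differentials be flat; it verifies the latter by an inductive splitting argument down the complex $A\ot_{kG}C_{\DOT}$, and then observes that all K\"unneth and Tor terms vanish except $A\ot_A A\cong A$ in degree $0$. Your first-quadrant double-complex spectral sequence (vertical homology first) reaches the same conclusion using only flatness of the terms --- since $(A\ot_{kG}C_i)\ot_A{-}$ is exact, it commutes with homology of the columns, so $E^1$ collapses to the bottom row $A\ot_{kG}C_{\DOT}$ and $E^2$ to $A$ in bidegree $(0,0)$ --- thereby dispensing with the paper's inductive projectivity-of-images argument. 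The trade-off is small: the paper's route is more elementary (no spectral sequence machinery, just a splitting argument plus a citation to Weibel), while yours is slightly leaner in hypotheses and makes transparent why neither $C_{\DOT}$ nor $D_{\DOT}$ needs to be $A^e$-projective. The one loose end you correctly flag --- identifying the degree-$0$ homology isomorphism with the augmentation $X_0\to A$ --- is immediate from $X_{0,0}\cong A\ot_A A$ and should be stated rather than left implicit.
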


In the case that
$C_{\DOT}$ is the (normalized) bar resolution of $kG$ and $D_{\DOT}$ is the
Koszul resolution of $S(V)$, our resolution $X_{\DOT}$ is precisely
that in~\cite[\S4]{GGV}.

\begin{proof}
We first check that for each $i,j$, the $A^e$-module $X_{i,j}$ is free.
By construction, 
\begin{eqnarray*}
   X_{i,j} & = &  (A\ot_{kG}kG\ot C_i' \ot kG) \ot_A 
   (S\ot D_j'\ot S\ot_S A) \\
   & \cong & (A\ot C_i'\ot kG)\ot_A (S\ot D_j'\ot A).
\end{eqnarray*}
We claim that this is isomorphic to $A\ot C_i'\ot D_j'\ot A$ as an
$A^e$-module. To see this, first define a map as follows:
\begin{equation}\label{id1}
  (A\ot C_i'\ot kG)\times (S\ot D_j'\ot A) \rightarrow  
       A\ot C'_i\ot D'_j\ot A
\end{equation}
$$\hspace{2cm}  (a\ot x\ot g , \ s\ot y \ot b) 
   \mapsto  a ( {}^{hg}s) \ot x \ot {}^gy\ot g
$$
where $x\in (C_i')_h$. This map is bilinear by its definition, and we check
that it is $A$-balanced: If $r\in S, \ell\in G$, then on the one hand,
\begin{eqnarray*}
  ((a\ot x\ot g)\cdot (r\ell), \ s\ot y\ot b) & = & 
   (a ({}^{hg}r)\ot x\ot g\ell, \ s\ot y\ot b)\\
     &\mapsto & a ({}^{hg}r) ({}^{hg\ell}s) \ot x\ot {}^{g\ell}y\ot g\ell b ,
\end{eqnarray*}
while on the other hand,
\begin{eqnarray*}
  ((a\ot x\ot g, \ (r\ell)\cdot (s\ot y\ot b)) &=& 
   (a\ot x\ot g, \ r({}^{\ell}s)\ot {}^{\ell}y\ot \ell b)\\
   &\mapsto & a ({}^{hg}r) ({}^{hg\ell}s) \ot x\ot {}^{g\ell}y \ot g\ell b).
\end{eqnarray*}
Therefore there is an induced map 
$$(A\ot C_i'\ot kG)\ot_A (S\ot D_j'\ot A)
\rightarrow A\ot C_i'\ot D_j'\ot A.$$ 
It is straightforward to verify that  an inverse map is given by
\begin{equation}\label{id2}
   a\ot x\ot y\ot b \mapsto (a\ot x\ot 1)\ot (1\ot y\ot b).
\end{equation}
Therefore the two spaces are isomorphic as claimed.

We wish to apply the K\"unneth Theorem, and to that end 
we check that each term in the complex $A\ot_{kG} C_{\DOT}$ consists of
free right $A$-modules, and that the image of each differential
in the complex is projective as a right $A$-module.
This may be proved inductively, starting on one end of the complex
$$
   \cdots \stackrel{f_2}{\longrightarrow} A\ot_{kG}C_1
      \stackrel{f_1}{\longrightarrow}
    A\ot_{kG}C_0 \stackrel{f_0}{\longrightarrow} A \rightarrow 0 .
$$
To see directly that each $A\ot_{kG} C_i$ is free as a right $A$-module, write
$$A\ot_{kG}C_i = A\ot_{kG}(kG\ot C_i' \ot kG)\cong A\ot C_i'\ot kG.$$
Choose a $k$-linear finite basis $\{x_m \mid 1\leq m\leq r_i\}$ 
of $C_i'$ for which each $x_m$ is homogeneous
with respect to the $G$-grading, and $r_i = \dim_k C_i'$. 
(A similar idea works if the $C_i'$ are infinite dimensional, however since
$G$ is finite, it is reasonable to assume that the $C_i'$ are finite
dimensional.)
Then a set of free generators 
of $A\ot_{kG} C_i$ as a right $A$-module is 
$$\{g\ot x_m \ot 1\mid g\in G, 1\leq m\leq r_i \}.$$
Indeed, if we fix $g$ in $G$ and $x_m$ as above, with $x_m$ in the $\ell$-component 
($\ell\in G$), then for each $s$ in $S$ and $h$ in $G$, 
$$(g\ot x_m\ot 1)\cdot ( {}^{\ell^{-1} g^{-1}}s)h = sg\ot x_m\ot h,$$
and consequently the full subspace 
$Sg\ot x_m\ot kG$ is generated from this single element.
It also follows that they are independent.  
Since $A$ is right $A$-projective, and $f_0$ is surjective, the map $f_0$ splits
so that $\ker f_0$ is a direct summand of $A\ot_{kG} C_0$ as a right $A$-module. 
Therefore $\ker f_0=\im f_1$ is right $A$-projective.
Repeat the argument with $A\ot_{kG}C_0$ replaced by $A\ot_{kG}C_1$ 
and $A$ replaced by $\im f_1$, and so on, to complete
the check.

The K\"unneth Theorem~\cite[Theorem~3.6.3]{W}
then gives for each $n$ an exact sequence
$$
 0\longrightarrow \bigoplus_{i+j=n} \coh_i(A\ot_{kG}C_{\DOT})\ot_A 
  \coh_j(D_{\DOT}\ot_S A)
\longrightarrow \coh_n((A\ot_{kG}C_{\DOT})\ot_A (D_{\DOT}\ot_S A))$$

\vspace{-.4cm}

$$ \hspace{4.45cm}
   \longrightarrow \bigoplus_{i+j=n-1} 
\Tor^A_1(\coh_i(A\ot_{kG} C_{\DOT}),
  \coh_j (D_{\DOT}\ot_S A))\rightarrow 0.
$$
Now $A\ot_{kG}C_{\DOT}$ and $D_{\DOT}\ot _S A$ are exact other than in
degree 0, where their homologies are each $A$.
Thus 
$\coh_j(A\ot_{kG} C_{\DOT})=
  \coh_i (D_{\DOT}\ot_S A)=0$ unless $i=j=0$.
The Tor term for $i=j=0$ is also zero as
$\Tor^A_1(A,A)=0$ (since $A$ is flat over $A$).
Thus
$$\coh_n ((A\ot_{kG}C_{\DOT})\ot_A (D_{\DOT}\ot_S A))=0
\text{ for all } n>0$$
and
$$
  \coh_0((A\ot_{kG} C_{\DOT})\ot _A (D_{\DOT}\ot_S A))\cong 
  \coh_0(A\ot_{kG}C_{\DOT})\ot_A
   \coh_0 (D_{\DOT}\ot_S A)\cong A\ot_A A\cong A.
$$
Thus $X_{\DOT}$ is an $A^e$-free resolution of $A$.
\end{proof}

The resolution $X_{\DOT}$, being more general than 
the one given in~\cite[\S4]{GGV}, has an advantage: 
One may use any convenient 
resolution of the group algebra $kG$ in the construction. 
The resolution in~\cite{GGV} uses the (normalized) bar resolution of $kG$,
resulting in a potentially larger complex $X_{\DOT}$.
In the example below, we show  that $X_{\DOT}$ may be quite tractable
when a smaller resolution of $kG$ is chosen. 

\begin{example}\label{example:cyclic}
{\em
Let $G$ be a cyclic group of prime order $p$, generated by $g$.
Let $k$ be a field of characteristic $p$.
Let $V=k^2$ with basis $v_1,v_2$.
Let $g$ act as the matrix
$$
   \left(\begin{array}{cc} 1&1\\0&1\end{array}\right)
$$
on the ordered basis $v_1,v_2$.
Let $S=S_k(V)$, the symmetric algebra, and let
$$
  D_{\DOT} : \hspace{1cm} 0\rightarrow S\ot \Wedge^2V \ot S \rightarrow
  S\ot \Wedge^1 V \ot S \rightarrow S\ot S\rightarrow S\rightarrow 0
$$
be the Koszul resolution of $S$, where we identify $\Wedge^1 V$ with
$V$ and $\Wedge ^2 V$ with $R=\{ v\ot w - w\ot v \mid v,w\in V\}$. 
The differentials commute with the $G$-action.
Let 
$$
  C_{\DOT}: \hspace{.4cm} \cdots \stackrel{v\cdot}{\longrightarrow} kG\ot kG
 \stackrel{u\cdot}{\longrightarrow} kG\ot kG
 \stackrel{v\cdot}{\longrightarrow} kG\ot kG
 \stackrel{u\cdot}{\longrightarrow} kG\ot kG
   \stackrel{m}{\longrightarrow} kG\rightarrow 0
$$
where $u=g\ot 1 -1\ot g$, $v=g^{p-1}\ot 1 + g^{p-2}\ot g + \cdots + 
1\ot g^{p-1}$, and $m$ is multiplication.
Then $C_{\DOT}$ is a $(kG)^e$-free resolution of $kG$; exactness may be
verified by constructing an explicit contracting homotopy.
We consider $kG\ot kG$ in even degrees to be a Yetter-Drinfeld module in
the usual way: $(kG\ot kG)_{g^i} = \Span_k\{x\ot y\mid xy=g^i \}$. But in
odd degrees let $(kG\ot kG)_{g^i} = \Span_k\{x\ot y\mid xy=g^{i-1}\}$.
This will ensure that the differentials preserve the $G$-grading. 

By Theorem~\ref{thm:resolution},
$
   X_{\DOT,\DOT} = (A\ot _{kG}C_{\DOT}) \ot_A (D_{\DOT}\ot_S A)
$
yields a free $A^e$-resolution $X_{\DOT}$  (the total complex) of $A$. 
By our earlier analysis, for all $i\geq 0$ and $0\leq j\leq 2$,  
$$
   X_{i,j}    \cong A\ot \Wedge^jV\ot A
$$
and the differentials are 
$
   d=d^h_{i,j}+d^v_{i,j} = d_i \ot \id + (-1)^i\id\ot d_j.$
}\end{example}

\quad

For our applications to Koszul algebras, we will need chain maps between the resolution
$X_{\DOT}$ and the bar resolution of $A$ with the properties stated
in the next lemma. 

\begin{lemma}\label{lem:maps}
Let $S$ be a finitely generated graded Koszul algebra over $k$
on which a finite group $G$ acts by graded automorphisms.
Let $C_{\DOT}$ be the bar resolution of $kG$, let $D_{\DOT}$
be the Koszul resolution of $S$, let $A=S\# G$, and 
let $X_{\DOT}$ be as in~(\ref{resolution-X}). 
Then there exist chain maps $\phi_{\DOT}: X_{\DOT}\rightarrow A^{\ot (\DOT +2)}$
and $\psi_{\DOT}: A^{\ot (\DOT + 2)}\rightarrow X_{\DOT}$  of degree 0
for which $\psi_n \phi_n$ is the identity map on the
subspace $X_{0,n}$ of $X_n$ for each $n\geq 0$. 
\end{lemma}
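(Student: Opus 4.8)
The plan is to construct both $\phi_{\DOT}$ and $\psi_{\DOT}$ by the standard dimension‑shifting argument that produces comparison chain maps between two resolutions of the $A^e$‑module $A$ — here $X_{\DOT}$ and the bar resolution $A^{\ot(\DOT+2)}$ — but to fix a deliberate choice at each inductive step on one distinguished summand so that the extra equality on $X_{0,n}$ is built in rather than checked afterwards. Since $G$ acts on $S$ by graded automorphisms, $kG$ sits in internal degree $0$, and the Koszul differential on $D_{\DOT}$ has degree $0$, both $X_{\DOT}$ and $A^{\ot(\DOT+2)}$ are graded resolutions with differentials of internal degree $0$; carrying the construction out in the category of graded $A^e$‑modules then automatically yields maps $\phi_{\DOT},\psi_{\DOT}$ of degree $0$ lifting $\id_A$.

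For $\phi_{\DOT}$: by Theorem~\ref{thm:resolution}, with the bar resolution of $kG$ (so $C_0'=k$) and the Koszul resolution of $S$ (so $D_n'=K^n(S)$) one has $X_{0,n}\cong A\ot K^n(S)\ot A$ as $A^e$‑modules, $\bigoplus_n X_{0,n}$ is a subcomplex of $X_{\DOT}$ because the horizontal differential vanishes on the column $i=0$, and on that column the differential is, up to sign, the Koszul differential $d_n$, which by the definition recalled in Section~\ref{sec:relative} is the bar differential of $S$ restricted to $\widetilde{K}^{\DOT}(S)$, hence to $K^{\DOT}(S)\subseteq V^{\ot\DOT}\subseteq S^{\ot\DOT}\subseteq A^{\ot\DOT}$. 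I would therefore define $\phi_n$ on $X_{0,n}$ to be the $A^e$‑linear Koszul inclusion $a\ot\kappa\ot a'\mapsto a\ot\kappa\ot a'$ into $A\ot A^{\ot n}\ot A$, which is compatible with the differentials on column $0$ by the previous sentence, and on the remaining summands $X_{i,n-i}$ with $i\geq 1$ by choosing homogeneous lifts in the usual way (possible since $\delta_{n-1}\circ\phi_{n-1}\circ d^X=0$ and $A^{\ot(\DOT+2)}$ is exact). This produces a degree‑$0$ chain map whose restriction to column $0$ is the Koszul inclusion.

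For $\psi_{\DOT}$: fix for each $n$ a graded complement $W_n$ with $A^{\ot n}=K^n(S)\oplus W_n$, giving an $A^e$‑module splitting $A^{\ot(n+2)}=(A\ot K^n(S)\ot A)\oplus(A\ot W_n\ot A)$. I would define $\psi_n$ on the first summand to be the identity onto $X_{0,n}\cong A\ot K^n(S)\ot A$, and on $A\ot W_n\ot A$ by homogeneous lifts as before. The one verification is that $d^X_n\circ\psi_n=\psi_{n-1}\circ\delta_n$ already on the first summand: applying $\delta_n$ to $a\ot\kappa\ot a'$ with $\kappa\in K^n(S)$ annihilates every interior term, since such a term multiplies only two adjacent $V$‑slots and $K^n(S)$ lies inside $V^{\ot j}\ot R\ot V^{\ot(n-2-j)}$ for every $j$ while multiplication sends $R$ to $0$ in $S$ — this is exactly the vanishing used in Section~\ref{sec:relative} to make $\widetilde{K}^{\DOT}(S)$ a subcomplex of the bar complex — so $\delta_n(a\ot\kappa\ot a')$ lands in $A\ot K^{n-1}(S)\ot A$ (using $K^n(S)\subseteq V\ot K^{n-1}(S)$ and $K^n(S)\subseteq K^{n-1}(S)\ot V$), where $\psi_{n-1}$ is the identity, and there it agrees with $d^X_n$ restricted to column $0$; the rest is a cycle in the exact complex $X_{\DOT}$, so the lifts on $A\ot W_n\ot A$ exist. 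Finally, since $\phi_n$ carries $X_{0,n}$ into $A\ot K^n(S)\ot A$, where $\psi_n$ is the identity, $\psi_n\phi_n$ is the identity on $X_{0,n}$, as required.

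I expect the only real difficulty to be organizational: tracking precisely which parts of the total differential $d^X=d^h+d^v$ map into column $0$ and which do not (so one knows exactly what must be lifted at each step), and confirming that the ``obvious'' maps on the $K^n(S)$‑summands commute with both the bar and Koszul differentials. As indicated, the latter reduces entirely to two facts already established in Section~\ref{sec:relative} — the Koszul complex is the bar complex restricted to $K^{\DOT}(S)$, and interior multiplications kill relations — so no homological input is needed beyond Theorem~\ref{thm:resolution} and exactness of the bar resolution, and gradedness follows once all complements $W_n$ and all lifts are chosen homogeneously.
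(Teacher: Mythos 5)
Your proposal is correct and follows essentially the same route as the paper: both construct $\phi_{\DOT}$ by lifting, with the restriction to the column $X_{0,\DOT}$ forced to be the standard Koszul-to-bar embedding, and both build $\psi_{\DOT}$ by splitting off the summand $A\ot K^n(S)\ot A$ of $A^{\ot(n+2)}$, declaring $\psi_n$ to be the identity there, and lifting on a (homogeneous) complement. The only cosmetic difference is that you verify the chain-map condition on the distinguished summand by the direct bar-differential computation ($\delta_n$ carries $A\ot K^n(S)\ot A$ into $A\ot K^{n-1}(S)\ot A$), whereas the paper invokes the inductive identity $\psi_{n-1}\phi_{n-1}=\id$ on $X_{0,n-1}$ together with $\delta_n\phi_n=\phi_{n-1}d_n$ — these amount to the same calculation.
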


A chain map was given explicitly in case
$S=S_k(V)$ by Guccione, Guccione, and Valqui~\cite[\S4.2]{GGV}.

\begin{proof}
Both $X_{\DOT}$ and $A^{\ot(\DOT +2)}$ are free resolutions of
$A$ as an $A^e$-module whose differentials are maps of degree $0$.
We first argue inductively that 
there exists a chain map 
$\phi_n:X_n\rightarrow A^{\ot(n+2)}$
of degree 0 for which $\phi_n |_{X_{0,n}}$ is induced by
the standard embedding of the Koszul complex into the bar complex. 

Suppose $S$ is generated by a finite dimensional $k$-vector space $V$
with quadratic relations $R$: $S=T_k(V)/\langle R \rangle$
(see Section~\ref{sec:relative}).
Define $\phi_0 = \id\ot\id = \psi_0$, the identity map
from $A\ot A$ to itself.
Consider $X_{0,\DOT}$ as a subcomplex (not necessarily acyclic)
of $X_{\DOT}$. 
An inductive argument shows that 
we may  define $\phi_{\DOT}$
so that when restricted to $X_{0,\DOT}$ it corresponds to the standard
embedding of the Koszul complex into the bar complex:
For $n=1$, this is the embedding of $A\ot V\ot A$ into $A\ot A\ot A$,
and one checks that $\phi_1$ on 
$X_1 = X_{0,1}\oplus X_{1,0} \cong
(A\ot V\ot A) \oplus (A\ot kG\ot A)$ may be defined by
$\phi_1(1\ot v\ot 1) = 1\ot v\ot 1$ and $\phi_1(1\ot g\ot 1)=1\ot g\ot 1$
for all $v\in V$, $g\in G$. 
For $n\geq 2$,
\begin{equation}\label{X0n}
   X_{0,n}\cong A\ot \left( \bigcap _{i=0}^{n-2} V^{\ot i}
   \ot R\ot V^{\ot (n-i-2)}\right) \ot A\, ,
\end{equation}
a free $A^e$-submodule of  $A^{\ot (n+2)}$ by its definition.
For each $i,j$ with $i+j=n$, 
choose a basis of the vector space $C_i'\ot D_j'$, 
whose elements are necessarily of degree $j$.
By construction, after applying $\phi_{n-1}d_n$ to these basis elements,
we obtain elements of degree $j$ in the kernel of $\delta_{n-1}$,
which is the image of $\delta_n$.
Choose corresponding elements in the inverse image of $\Ima(\delta_n)$
to define $\phi_n$. 
If we start with an element in $X_{0,n}$, we may choose its canonical image in
$A^{\ot (n+2)}$ (see (\ref{X0n})). 
Elements of $X_{i,j}$ ($i>0$) have
different degree, so their images under $\phi_n$ may be chosen
independently of those of $X_{0,n}$.

Now we show inductively 
that each $\psi_n$ may be chosen to be a degree 0 map for which 
$\psi_n\phi_n$ is the identity map on $X_{0,n}$.
In degree 0, this is true as $\phi_0, \psi_0$ are identity maps.  
In degree 1, $X_{0,1}\cong A\ot V\ot A$ and
$X_{1,0}\cong A\ot kG\ot A$. 
Note that $V\oplus kG$ is a direct summand of $A$ as a vector space. 
We may therefore define
$\psi_1(1\ot v\ot 1)= 1\ot v\ot 1$ in $X_{0,1}$ for all $v\in V$ and 
$\psi_1(1\ot g\ot 1) = 1\ot g\ot 1$ in $X_{1,0}$ for all $g\in G$.
We define $\psi_1$ on elements of the form $1\ot z\ot 1$, for $z$ ranging
over a basis of a 
chosen complement of $V\oplus kG$ as a vector subspace of $A$, 
arbitrarily subject to the condition that $d_1\psi_1(1\ot z\ot 1)
=\psi_0\delta_1(1\ot z\ot 1)$. 
Since $\psi_0, d_1, \delta_1$ all have degree 0 as maps, one may
also choose $\psi_1$ to have degree 0. Note that 
$\psi_1\phi_1$ is the identity map on $X_{0,1}$.
(In fact, it is the identity map on all of $X_1$.) 
Now let $n\geq 2$ and assume that $\psi_{n-2}$ and $\psi_{n-1}$ have
been defined to be degree 0 maps for which $d_{n-1}\psi_{n-1}=
\psi_{n-2}\delta_{n-1}$ and $\psi_{j}\phi_{j}$ is the identity map
on $X_{0,j}$ for $j= n-2, n-1$. To define $\psi_{n}$,
first note that $A^{\ot (n+2)}$ contains as an $A^e$-submodule 
the space $X_{0,n}$ (see~(\ref{X0n})) and the image of each $X_{i,j}$
under $\phi_n$ ($n=i+j$, $i\geq 1$).
By construction, their images intersect in 0 (being generated by
elements of different degrees), the image of $X_{0,n}$
under $\phi_n$ is free, and moreover $\phi_n$ is injective on
restriction to $X_{0,n}$.  
Choose a set of free generators of $\phi_n(X_{0,n})$, 
and choose a set of free generators
of its complement in $A^{\ot (n+2)}$. 
For each chosen generator $x$ of $X_{0,n}$, we 
define $\psi_n (\phi_n (x))$ to be $x$. 
Since  $d_{n}(x)$ is in $X_{0,n-1}$ by definition, 
we have by induction $\psi_{n-1}\phi_{n-1}d_{n}(x)= d_{n}(x)$.
As  $\delta_n\phi_n(x)=\phi_{n-1} d_{n}(x)$, we now have 
$d_n\psi_n\phi_n(x) = \psi_{n-1}\delta_n \phi_n(x)$. That is,
on these elements, 
$\psi_n$ extends the chain map from degree $n-1$ to degree $n$. 
On the remaining
free generators of $A^{\ot (n+2)}$, define $\psi_n$ arbitrarily
subject to the requirement that it be a chain map of degree 0.
\end{proof}

\section{Deformations of quadratic algebras}
\label{sec:main}

Let $B$ be an arbitrary $k$-algebra. 
Let $U$ be a $B$-bimodule that is free as a left
$B$-module and as a right $B$-module, and 
set
$$
   T :=T_B (U) = B\oplus U\oplus (U\ot_B U) \oplus 
(U\ot_B U\ot_B U) \oplus\cdots\, ,
$$
the tensor algebra of $U$ over $B$
with $i$-th graded component $T^i:= U^{\ot_B i}$ and $T^0=B$.
Let $F^i(T)$ be the $i$-th filtered component:
$F^i(T) = T^0\oplus T^1\oplus\cdots\oplus T^i$.
We call a $B$-subbimodule $P$ of $F^2(T)$ a
set of {\bf filtered quadratic relations over $B$}
and we call the quotient $T_B(U)/\langle P \rangle$
a {\bf filtered quadratic algebra over $B$
generated by $U$}.
By Proposition~\ref{uniquePBWrelations},
if the relations are of PBW type, then they are unique.
Set $R=\pi(P)$ where (recall)
$\pi$ is the projection $F^2(T)\rightarrow U\ot_B U$,
so that $T/\langle R \rangle$ is the homogeneous quadratic
algebra determined by $P$.
Note that $R$ is a $B$-subbimodule of $U\ot_B U$. 

We give below some conditions sufficient to guarantee that $P$
and the quadratic algebra $T_B(U)/\langle P \rangle$
it defines are of PBW type. First we present two lemmas. 
It is not difficult to see that any quadratic algebra over $B$
of PBW type
must be defined by a $B$-subbimodule $P\subset T$ 
devoid of elements of filtered degree one.
We record this observation and more in the next lemma.
We choose labels consistent with those in~\cite{BG}
for ease of comparison. 
The proof (see~(\ref{rawIJ})) 
of Proposition~\ref{uniquePBWrelations} implies:
\begin{lemma}\label{lemma:IJ}
Suppose $P\subset T$ is a set of filtered
quadratic relations over $B$ defining
a filtered quadratic algebra $T/\langle P \rangle$ of PBW type
(with respect to $P$). Then
\begin{itemize}
\item[(I)] $\ P\cap F^1(T) = \{0\}$, and
\item[(J)] $\ (F^1(T) P F^1(T) ) \cap F^2(T) = P$.
\end{itemize}
\end{lemma}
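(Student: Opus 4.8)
The plan is to deduce both (I) and (J) from the two identities in~(\ref{rawIJ}) established in the proof of Proposition~\ref{uniquePBWrelations}, together with the additional hypothesis that $P$ is already a $B$-subbimodule (so $T^0PT^0 = BPB = P$ in the notation there, since $T^0 = B$). Indeed, (I) is literally the first identity $\langle P\rangle \cap F^1(T) = \{0\}$, intersected with the observation that $P \subseteq \langle P\rangle$; I would simply note that $P \cap F^1(T) \subseteq \langle P\rangle \cap F^1(T) = \{0\}$.

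For (J), the key point is that $F^1(T)\,P\,F^1(T)$ is a $B$-subbimodule of $F^3(T)$ lying inside $\langle P\rangle$, so that $(F^1(T)PF^1(T)) \cap F^2(T) \subseteq \langle P\rangle \cap F^2(T) = BPB = P$ by the second identity in~(\ref{rawIJ}). For the reverse containment $P \subseteq (F^1(T)PF^1(T))\cap F^2(T)$, I would use that $1 \in T^0 = B \subseteq F^1(T)$, so $P = 1\cdot P\cdot 1 \subseteq F^1(T)PF^1(T)$, and $P \subseteq F^2(T)$ by hypothesis since $P$ is a set of filtered quadratic relations. Combining the two inclusions gives equality.

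The only subtlety is bookkeeping around the fact that~(\ref{rawIJ}) was proved for an arbitrary set $P$ of PBW relations with the bimodule $T^0PT^0$ on the right-hand side, whereas here $P$ is assumed to be a $B$-subbimodule from the outset (this is part of the standing setup in Section~\ref{sec:main}, where $P$ is taken to be a $B$-subbimodule of $F^2(T)$); so I would open by remarking that under this hypothesis $T^0PT^0 = BPB = P$ since $B$ contains $1$ and $P$ is closed under left and right multiplication by $B$. I do not anticipate a genuine obstacle: the whole argument is a short unwinding of already-proved facts. If anything, the delicate part is making sure the statement of~(\ref{rawIJ}) is invoked correctly — namely that it applies to \emph{any} set of PBW filtered quadratic relations, in particular to our $P$ — and that one does not need $P$ itself to be the unique (additively and $B$-bimodule closed) set of relations, only that it be \emph{a} set of PBW relations that happens to be a $B$-subbimodule.
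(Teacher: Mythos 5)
Your proposal is correct and matches the paper's intent exactly: the paper proves Lemma~\ref{lemma:IJ} simply by citing~(\ref{rawIJ}) from the proof of Proposition~\ref{uniquePBWrelations}, and your argument is precisely the unwinding of that citation (using $T^0PT^0=BPB=P$ since $P$ is a $B$-subbimodule, the trivial inclusion $P\subseteq F^1(T)PF^1(T)\cap F^2(T)$, and $F^1(T)PF^1(T)\subseteq\langle P\rangle$ for the reverse). No gaps.
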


If Condition (I) of Lemma~\ref{lemma:IJ}  holds, 
then each (nonhomogeneous) generating relation defining
the quadratic algebra $T/\langle P \rangle$
may be expressed as a unique element 
of homogeneous degree 2 plus linear and constant terms. 
We record these terms with
functions $\alpha$ and $\beta$:
Condition (I) implies existence of $k$-linear maps
$\alpha: R \rightarrow U$ and
$\beta: R\rightarrow B$ for which 
$$
  P = \{ x - \alpha(x) -\beta(x) \mid x\in R\}.
$$
Since $P$ is a $B$-subbimodule of $T$, so is $R$,
and it is not hard to see that
the maps $\alpha$ and $\beta$ are $B$-bimodule homomorphisms.
We may now use the maps $\alpha$ and $\beta$ to 
explore the PBW property using cohomology
instead of examining overlap polynomials and 
ambiguities (see~\cite{AlgorithmicMethods}, for example)
explicitly.
Note that since $U$ is free (and thus flat) as a left $B$-module and
as a right $B$-module, the spaces $R\ot_B U$ and $U\ot_B R$ may be
identified with subspaces of $U^{\ot _B 3}$.

\begin{lemma}\label{lemma:3condns}
Suppose $P\subset T$ is a set of filtered quadratic relations over $B$
defining
a filtered quadratic algebra $T/\langle P\rangle$ of PBW type
(with respect to $P$). Then 
\begin{itemize}
\item[(i)] $\ \Ima (\alpha\ot_B\id - \id\ot_B\alpha) \subset R$, 
\item[(ii)] $\ \alpha\circ (\alpha\ot_B \id - \id\ot_B\alpha) = 
- (\beta\ot_B \id 
                 -\id\ot_B\beta)$,
\item[(iii)] $\ \beta\circ (\alpha\ot_B\id - \id\ot_B\alpha)= 0$,
\end{itemize}
where the maps $\alpha\ot_B\id - \id\ot_B\alpha$ and
$\beta\ot_B\id - \id\ot_B\beta$ are defined
on the subspace $(R\ot_B U)\cap (U\ot_B R)$ of $T$. 

\end{lemma}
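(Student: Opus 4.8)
The plan is to derive conditions (i)--(iii) directly from the PBW hypothesis by a Gröbner-basis/ambiguity style argument, reformulated in the language of the maps $\alpha$ and $\beta$. The key observation is that an element lying in $(R\ot_B U)\cap(U\ot_B R)$ is precisely an ``overlap'': it can be written both as $\sum x_i\ot_B u_i$ with $x_i\in R$ and as $\sum u'_j\ot_B x'_j$ with $x'_j\in R$. For such an element $w$, I would compute, inside the quotient $T/\langle P\rangle$, the two expressions one obtains by using the generating relations $x-\alpha(x)-\beta(x)$ to rewrite $w$ starting from the left-hand factorization versus the right-hand factorization. Both computations land in $F^2(T)$, and since $T/\langle P\rangle$ is of PBW type, the image of $F^2(T)$ in $T/\langle P\rangle$ is a direct sum (as $B$-bimodule) of the images of $T^0$, $T^1$, and a complement to $R$ in $T^2$; equivalently, $\langle P\rangle\cap F^2(T)=P$ (Condition (J) of Lemma~\ref{lemma:IJ}). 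So the difference of the two rewritten expressions, being an element of $\langle P\rangle\cap F^2(T)$, must lie in $P$, and then comparing homogeneous components of degrees $2$, $1$, and $0$ separately yields (i), (ii), (iii) respectively.

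More concretely, the first step is to make precise the two rewriting maps on $(R\ot_B U)\cap(U\ot_B R)\subset U^{\ot_B 3}$. Applying $(\id - \alpha - \beta)$ in the first two tensor slots gives, from $w=\sum x_i\ot_B u_i$, an element $\sum\big(x_i - \alpha(x_i) - \beta(x_i)\big)\ot_B u_i$ which is congruent to $0$ in $T/\langle P\rangle$; rewriting from the right factorization gives another element congruent to $0$. Subtracting, one gets an element of $\langle P\rangle$. The degree-$3$ parts cancel (both equal $w$), so the difference lies in $F^2(T)$, hence in $P$ by Condition (J). Writing the degree-$2$ component of a generic element of $P$ as lying in $R$ forces the degree-$2$ component of the difference — which one computes to be $(\alpha\ot_B\id - \id\ot_B\alpha)(w)$ up to sign — to lie in $R$; that is (i). Once (i) holds, $\alpha$ may be applied to $(\alpha\ot_B\id-\id\ot_B\alpha)(w)$, and matching the degree-$1$ component of the difference against the $\alpha$-part of the element of $P$ gives (ii); matching the degree-$0$ component against the $\beta$-part gives (iii).

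I expect the main obstacle to be bookkeeping: carefully tracking which tensor slot each application of $\alpha$ or $\beta$ acts on, and verifying that after the substitutions the surviving terms of each homogeneous degree are exactly the combinations $\alpha\ot_B\id - \id\ot_B\alpha$, $\beta\ot_B\id-\id\ot_B\beta$, and their composites with $\alpha$, $\beta$. In particular one must check that the lower-order terms produced by rewriting the ``outer'' factor (the single $u_i$ that was not part of a relation) do not interfere — they don't, because that factor is never rewritten, it only gets multiplied on one side. A secondary subtlety is that all maps in sight must be verified to respect the $B$-bimodule structure so that ``lies in $P$'' (a $B$-subbimodule statement) can be invoked and so that the identification of $R\ot_B U$ and $U\ot_B R$ with subspaces of $U^{\ot_B 3}$ is legitimate; but this is exactly the flatness of $U$ over $B$ on both sides, already noted in the paragraph preceding the lemma, together with the fact (also already observed) that $\alpha$ and $\beta$ are $B$-bimodule homomorphisms. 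Everything else is the same computation as in the classical Braverman--Gaitsgory setting, now carried out relative to the base ring $B$ rather than a field.
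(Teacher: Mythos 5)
Your proposal is correct and follows essentially the same route as the paper: form the two rewritings of an element of $(R\ot_B U)\cap(U\ot_B R)$ via $PT^1$ and $T^1P$, subtract to land in $(F^1(T)PF^1(T))\cap F^2(T)=P$ by Condition (J), and compare homogeneous components of degrees $2$, $1$, $0$ to read off (i), (ii), (iii). The only cosmetic difference is your Gr\"obner-basis framing and your citing of (J) in the form $\langle P\rangle\cap F^2(T)=P$ (which holds since $P$ is a $B$-subbimodule, by the proof of Proposition~\ref{uniquePBWrelations}), neither of which changes the argument.
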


\begin{proof}
By Lemma~\ref{lemma:IJ}, Conditions (I) and (J) hold. We 
show that (I) and (J) imply (i), (ii), and (iii).  
Let $x\in (R\ot_B U)\cap (U\ot_B R)$. By definition of $\alpha$ and $\beta$, 
\begin{eqnarray*}
  x - (\alpha\ot_B \id + \beta\ot_B\id) (x)  & \in & PT^1 \ \subset \ F^1(T)PF^1(T),\\
 x - (\id\ot_B\alpha + \id\ot_B\beta) (x) & \in & T^1 P \ \subset \ F^1(T)PF^1(T) .
\end{eqnarray*}
We subtract these two expressions and check degrees
to see that Condition~(J) implies
$$
   (\alpha\ot_B\id - \id\ot_B\alpha + \beta\ot_B \id - \id\ot_B \beta)(x) \ \ \in \ \ 
    (F^1(T)PF^1(T))\cap F^2(T) \ = \  P.
$$
Again considering the degrees of the above elements, we must have
\begin{eqnarray*}
 (\alpha\ot_B\id - \id\ot_B\alpha) (x)  & \in & R,\\
 \alpha((\alpha\ot_B\id - \id\ot_B \alpha)(x)) & = & - (\beta\ot_B\id -\id\ot_B \beta)(x),\\
    \beta((\alpha\ot_B\id - \id\ot_B\alpha)(x)) & = & 0.
\end{eqnarray*}

\end{proof} 

\begin{example}{\em 
We return to Example~\ref{cuteexample} in which
$B=k$ and the $B$-bimodule $U$
is the vector space $V=\Span_k\{x,y\}$
with $P'=\Span_k\{xy-x, yx-y, x^2-x, y^2-y\}$.
As $\beta$ is identically zero, an easy check of Conditions
(i), (ii), and (iii) amounts to checking overlap relations
in $P'$ and verifying that $P'$ is a noncommutative Gr\"obner basis
for the ideal it generates in $T_k(V)$.
}\end{example}

In the remainder of this section, we turn to the case $B=kG$, a finite
group algebra. We show in the next theorem 
that the above conditions, adapted from Braverman and Gaitsgory~\cite{BG}, 
are both necessary and sufficient in the case that 
the homogeneous quadratic algebra determined by $P$ is isomorphic to
$S\# G$ for some Koszul algebra $S$.
(Precisely, we set $U=V\ot kG$ for a finite dimensional $k$-vector
space $V$ and view $U$ as a bimodule over the group algebra
$B=kG$.)
\begin{thm}\label{thm:main}
Let $S$ be a finitely generated
graded Koszul algebra over $k$
on which a finite group $G$ acts by graded
automorphisms.
Suppose a filtered quadratic algebra $A'$
over $kG$ is defined by a
set of filtered quadratic relations
that determine a homogeneous quadratic
algebra isomorphic to $S\# G$.
Then $A'$ is of PBW type 
if and only if Conditions (I), (i), (ii), and (iii)
hold. 
\end{thm}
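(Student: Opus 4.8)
The plan is to prove Theorem~\ref{thm:main} by setting up a dictionary between filtered quadratic relations $P$ over $kG$ with homogeneous part $S\# G$ and first-level graded deformations of $A = S\# G$, and then running the deformation-theoretic obstruction calculus of Braverman--Gaitsgory (recalled in Proposition~\ref{lemma:obstruction}) on the resolution $X_{\DOT}$ of Theorem~\ref{thm:resolution} rather than on the bar resolution. First I would observe that, since the homogeneous quadratic algebra determined by $P$ is $S\# G = T_{kG}(U)/\langle R\rangle$ with $U = V\ot kG$, Condition (I) is exactly the statement that $P$ contains no filtered-degree-one elements, so under (I) we get the $B$-bimodule maps $\alpha: R\to U$ and $\beta: R\to B$ as in the text. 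The key translation is that the data $(\alpha,\beta)$ determines a candidate first-level (and then second-level) graded deformation of $A$: one reads $\alpha$ as a degree $-1$ Hochschild $2$-cochain $\mu_1$ and $\beta$ as a degree $-2$ cochain $\mu_2$, using the fact that $X_{0,\DOT}$ is (a copy of) the Koszul complex of $S$ embedded in $X_{\DOT}$, so that a cochain on $X_{\DOT}$ is pinned down on $X_{0,2} \cong A\ot R \ot A$ by its values on $R$. Lemma~\ref{lem:maps} is the tool that lets me move cochains and cocycle/coboundary conditions back and forth between $X_{\DOT}$ and the bar resolution $A^{\ot(\DOT+2)}$ while controlling degree, so that Proposition~\ref{lemma:obstruction}'s bar-resolution statements can be evaluated on $X_{\DOT}$.

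The forward direction ((I),(i),(ii),(iii) $\Rightarrow$ PBW) I would handle by showing these conditions guarantee that $\mu_1 = \alpha$ is a cocycle on $X_{\DOT}$ (hence on the bar resolution), that $\mu_2 = \beta$ solves the first obstruction equation, and that all higher obstructions vanish for degree reasons — since $\mu_j = 0$ for $j\ge 3$ (the graded pieces $\HH^{3,-i}(A)$ for $i\ge 3$ receive no contribution, as the relevant cochains must be built from $\alpha,\beta$ which live in degrees $-1,-2$), so (\ref{delta3}) forces the higher obstructions to zero once (ii) and (iii) hold. This produces a genuine graded deformation $A_t$ of $A$ over $k[t]$; specializing $t\mapsto 1$ recovers a filtered algebra whose associated graded is $A = S\# G$, which is precisely the PBW property for $A'$ with respect to $P$. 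Here the crucial structural input is that $S$ is Koszul, so $\widetilde K^{\DOT}(S)$ resolves $S$ over $S^e$ and hence $X_{\DOT}$ genuinely resolves $A$ over $A^e$ — this is what makes the three Braverman--Gaitsgory conditions \emph{sufficient} rather than merely necessary. The converse direction (PBW $\Rightarrow$ (I),(i),(ii),(iii)) is largely already done: (I) follows from Lemma~\ref{lemma:IJ}, and (i),(ii),(iii) are exactly Lemma~\ref{lemma:3condns}, which derives them from Conditions (I) and (J), the latter being automatic for PBW algebras.

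The main obstacle I anticipate is the bookkeeping needed to verify that $\alpha$ and $\beta$, a priori only defined on $R$, really do assemble into well-defined Hochschild cochains on the genuinely nonhomogeneous resolution $X_{\DOT}$ of $A$, and that the cocycle condition $d^*(\mu_1) = 0$ evaluated on $X_{0,3}$ and on the mixed terms $X_{i,j}$ with $i>0$ reduces \emph{precisely} to Conditions (i),(ii),(iii) — in other words, that no extra constraints appear from the group-resolution directions $C_{\DOT}$. I expect this to work out because $\mu_1,\mu_2$ have strictly negative degree while the generators of $X_{i,j}$ for $i>0$ carry group-element factors of degree $0$, so by the degree-counting already exploited in Lemma~\ref{lem:maps} the cochain conditions on those components are either vacuous or follow formally; nonetheless, checking this carefully (and checking $B$-bilinearity of the induced cochains over $kG$, which is where nonsemisimplicity could in principle bite) is the technical heart of the argument. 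A secondary point requiring care is matching the obstruction in (\ref{delta3}) against the condition that $x - \alpha(x) - \beta(x)$ generate an ideal with the right Hilbert series; I would phrase this via the standard equivalence (PBW $\Leftrightarrow$ the homogeneous version surjects onto the associated graded with equal dimensions $\Leftrightarrow$ existence of the graded deformation), citing Proposition~\ref{lemma:obstruction} for the last equivalence.
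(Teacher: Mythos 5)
Your plan follows essentially the same route as the paper: the converse direction via Lemmas~\ref{lemma:IJ} and~\ref{lemma:3condns}, and the forward direction by extending $\alpha,\beta$ to cochains on $X_{\DOT}$ (with $C_{\DOT}$ the bar resolution of $kG$, $D_{\DOT}$ the Koszul resolution of $S$), transferring them to the bar resolution of $A$ via the chain maps of Lemma~\ref{lem:maps}, running the obstruction calculus of Proposition~\ref{lemma:obstruction}, and identifying $A'$ with the fiber at $t=1$ by a surjection-plus-dimension-count. The architecture is right, but two steps as written have gaps. First, the assertion that ``$\mu_2=\beta$ solves the first obstruction equation'' would fail if executed naively: setting $\mu_2=\psi_2^*(\beta)$ only guarantees that $\gamma := \delta^*(\mu_2)+\mu_1\circ(\mu_1\ot\id-\id\ot\mu_1)$ pulls back to zero on $X_3$, hence is a \emph{coboundary} on the bar resolution, not zero. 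The paper must then correct $\mu_2$ by subtracting a cochain $\mu$ with $\delta^*(\mu)=\gamma$ and adding back a cocycle $\mu'$ lifting $\phi^*(\mu)$, so that the corrected $\mu_2$ satisfies the obstruction identity on the nose while still pulling back to $\beta$. Your plan does flag the transfer between resolutions as the technical heart, but this specific correction is the point where the naive transfer breaks.

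Second, the claim that ``$\mu_j=0$ for $j\ge 3$'' is false: the construction only yields $\phi^*(\mu_j)=0$ for $j\ge 3$ (equivalently $\mu_j$ vanishes on $R$), and $\mu_3$ must in general be a nonzero cochain produced by Condition (iii) making the level-$3$ obstruction a coboundary. Relatedly, your parenthetical that $\HH^{3,-i}(A)$ receives no contribution for $i\ge 3$ is off by one: $X_{0,3}$ is generated in degree $3$, so degree $-3$ cochains on $X_3$ can be nonzero (landing in $kG$), which is exactly why Condition (iii) is needed at level $3$; only for $i\ge 4$ do degree $-i$ cochains on $X_3$ vanish identically, killing all higher obstructions automatically. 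With these two repairs your outline matches the paper's proof.
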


\begin{proof}
Suppose the Koszul algebra $S$ is generated by the $k$-vector space $V$
with some $k$-vector space of quadratic relations $R'\subset V\ot V$, i.e.,
$S=T_k(V)/\langle R' \rangle$.  
Let $U= V\ot kG$, a $kG$-bimodule with right action
given by multiplication on the rightmost factor $kG$ only 
and left action given by 
$ {}^g (v\ot h) = {}^g v \ot gh$ for all $v\in V$,
$g,h\in G$. 
Set  $R= R'\ot kG$, similarly a $kG$-bimodule
(as $R'$ is a $kG$-module).
Then
$$
  T_{kG}(U)/\langle R\rangle \cong
    (T_k(V)/\langle R'\rangle ) \# G = S\# G
    \cong
    (T_k(V)\# G)/\langle R'\rangle,
$$
as a consequence of the canonical isomorphism between $T_{kG}(U)$
and $T_k(V)\# G$ given by identifying elements of $V$ and of 
$G$ and moving all group elements
far right;
here $\langle R'\rangle$ denotes the ideal of $T_k(V)$ or of $T_k(V)\# G$
generated by $R'$, respectively. 

We may assume that $A'$ is also generated by $U$ over $kG$,
i.e., $A'=T_{kG}(U)/\langle P \rangle$ for a set
of filtered quadratic relations $P$ over $kG$.
Note that $\pi(P)=R$ since
both $\pi(P)$ and $R$ are $kG$-bimodules generating
the same ideal in $T_{kG}(U)$ 
(use Proposition~\ref{uniquePBWrelations}, for example).
The conditions are then 
necessary by Lemmas~\ref{lemma:IJ} and~\ref{lemma:3condns}.
It remains to prove that they are sufficient, so
assume Conditions (I), (i), (ii), and (iii) hold for
$P= \{ x - \alpha(x) -\beta(x) \mid x\in R\}$.

We adapt the proof of Braverman and Gaitsgory~\cite[\S 4]{BG} to our setting
using the resolution $X_{\DOT}$ given by~(\ref{resolution-X})
after choosing $C_{\DOT}$ to be 
the bar resolution~(\ref{relative-bar}) of $kG$
and $D_{\DOT}$ to be the Koszul resolution~(\ref{relativeKoszul}) of $S$.
By Theorem~\ref{thm:resolution}, $X_{\DOT}$
calculates the Hochschild cohomology $\HH^{\DOT}(A)$
cataloging deformations
of $A$.
We first extend the maps $\alpha$ and $\beta$ to cochains on $X_{\DOT}$.
We then use chain maps between $X_{\DOT}$ and the bar resolution
for $A$ to convert $\alpha$ and $\beta$ to Hochschild 2-cochains
which can define multiplication maps for a potential deformation.  
We modify the cochains as necessary to preserve the conditions of the theorem.
Using these conditions, we build a second level graded deformation of
$A$.  We then extend to a graded deformation of $A$ and conclude 
the PBW property
using properties of the resolution $X_{\DOT}$.
We note that Conditions (i), (ii), and (iii) may be interpreted as conditions
on a tensor product over $k$ for the extensions of the
maps $\alpha, \beta$ to $X_{\DOT}$. 

We first extend $\alpha$ and $\beta$ to $X_{\DOT}$.
In degree 2, $X_2$ contains the direct summand
$X_{0,2}\cong A\ot R'\ot A$ (apply Theorem~\ref{thm:resolution}
with $i=0$, $j=2$). 
Note that $R=R'\ot kG\subseteq V\ot V\ot kG\cong U\ot_{kG} U$,
and we thus view the
$kG$-bilinear maps $\alpha, \beta: R\rightarrow A$ as maps on 
$R'\ot kG$. 
Extend them to $A^e$-module maps
from $A\ot R'\ot A\cong A\ot R\ot S$ to $A$
by composing with the multiplication map, 
and, by abuse of notation, denote these extended maps by $\alpha, \beta$
as well. 
Set $\alpha$ and $\beta$ equal to  
0 on the summands $X_{2,0}$ and $X_{1,1}$ of $X_2$
so that  
they further extend to maps $\alpha,\beta: X_2 \rightarrow A$. 

Condition (i) implies that $\alpha$ is 0 on the image of the differential on
$X_{0,3}$.
Since $\alpha$ is a $kG$-bimodule homomorphism by its definition, 
$\alpha$ is $G$-invariant.
We claim that this implies
it is also 0 on the image of the differential on $X_{1,2}$:
Let $a,b\in A$, $g\in G$, and $r\in R'$, and
consider $a\ot g\ot r\ot b$ as
an element of $X_{1,2}\cong A\ot kG\ot R'\ot A$ (apply Theorem~\ref{thm:resolution}
with $i=1$, $j=2$). 
We apply~(\ref{id2}): 
\begin{eqnarray*}
   d(a\ot g\ot r\ot b) & = & d((a\ot g\ot 1)\ot (1\ot r\ot b))\\
    &=& d(a\ot g\ot 1)\ot (1\ot r\ot b) - (a\ot g\ot 1)\ot d(1\ot r\ot b).
\end{eqnarray*}
The second term lies in $X_{1,1}$, but 
$\alpha$ is 0 on $X_{1,1}$ by definition. 
Therefore 
\begin{eqnarray*}
  \alpha(d(a\ot g\ot r\ot  b)) 
    &=& \alpha((ag\ot 1 - a\ot g)\ot (1\ot r\ot b))\\
    &=& \alpha(ag\ot r\ot b - a\ot  {}^gr\ot gb)\\
   &=& ag\alpha(r) b - a \alpha({}^gr) gb,
\end{eqnarray*}
where we use~(\ref{id1}). 
But $g\alpha(r) = \alpha({}^gr) g$, since $\alpha$ is $G$-invariant,  
and thus $\alpha$ is zero on the image of $d$ on $X_{1,2}$. 
It follows that $\alpha$ is a 2-cocycle on $X_{\DOT}$
and defines a Hochschild cohomology class of $\HH^2(A)$.
Thus $\alpha$ yields a first level deformation
$A_1$ of $A$ (i.e., an infinitesimal deformation of $A$) 
with some
first multiplication map $\mu_1: A\ot A\rightarrow A$. 

In fact, we may apply the chain map $\psi_{\DOT}$ of Lemma~\ref{lem:maps}
and choose $\mu_1=\psi_2^* (\alpha)$. 
Note that $\alpha$
is homogeneous of degree $ - 1$ by its definition, and therefore
so is $\mu_1$. 
We claim that $\phi_2^*(\mu_1)=\alpha$ as cochains.
To verify this, first let $x\in X_{0,2}$. 
By Lemma~\ref{lem:maps}, $\psi_2\phi_2(x) = x$, and hence
$$
    \mu_1\phi_2(x) = \alpha\psi_2\phi_2(x) = \alpha(x).
$$
Now  let $x$ be a free generator of
$X_{1,1}$ or of $X_{2,0}$, so that it has degree 1 or 0.
Then $\psi_2\phi_2(x)$ has degree 1 or 0, implying that
its component in $X_{0,2}$ is 0, from which it follows that 
$\mu_1\phi_2(x)= \alpha\psi_2\phi_2 (x) = 0 = \alpha(x)$. Therefore $\phi_2^*(\mu_1)=\alpha$. 

Condition (ii) implies that 
$-d_3^*(\beta)=\alpha\circ (\alpha\ot\id - \id\ot \alpha)$
as cochains on $X_{\DOT}$. 
(Again, since $\beta$ is $G$-invariant,
it will be 0 on the image of the differential on $X_{1,2}$.) 
Let $\mu_2 = \psi_2^*(\beta)$. 
By a similar argument as that above for $\alpha$, we have
$\phi_2^*(\mu_2)=\beta$. 
However, we want $\mu_1,\mu_2$ to 
satisfy the differential condition that $\alpha,\beta$ satisfy, i.e., 
\begin{equation}
\label{differentialcondition}
  -\delta^* (\mu_2) = \mu_1\circ (\mu_1\ot \id-\id \ot\mu_1)
\end{equation}
as cochains on the bar resolution. 
We modify $\mu_2$ as necessary to satisfy this
condition.
Let $$\gamma = \delta^* (\mu_2) + \mu_1\circ (\mu_1\ot\id - \id\ot\mu_1).$$
The cochain $\phi^*(\gamma)$ is zero on  $X_{0,3}$ 
by Condition (ii), since the image of $\phi$ on $X_{0,3}$ is contained
in $A\ot ((R'\ot V) \cap (V\ot R'))\ot A$, and $\phi^*(\mu_1)=\alpha$.
Additionally, $\phi^*(\gamma)$ is 0 on $X_{2,1}$ and on $X_{3,0}$
since it is a map of degree $ - 2$. 
To see that it is also 0 on $X_{1,2}$, note that as an 
$A^e$-module, the image of $X_{1,2}$ under $\phi$ is generated
by elements of degree 2.
Since $\alpha$ contains $kG$ in its kernel and $\mu_1=\psi_2^*(\alpha)$, 
the map $\mu_1\circ (\mu_1\ot\id - \id\ot \mu_1)$ 
must be 0 on the image of $X_{1,2}$ under $\phi$. 
Since $\beta$ is $G$-invariant and $\phi^*(\mu_2)=\beta$, 
we have that $\phi^*\delta^* (\mu_2)=d^*\phi^*(\mu_2)=d^*(\beta)$ 
is 0 on $X_{1,2}$.
Therefore $\phi^*(\gamma)$ is 0 on $X_{1,2}$. 
We have shown that $\phi^*(\gamma)$ is 0 on all of $X_3$, and so 
$\gamma$ defines the zero cohomology
class on the bar complex as well, i.e., it is a coboundary.
Thus there is a 2-cochain $\mu$ of degree $-2$ on the bar complex with 
$\delta^* (\mu) =\gamma$.
If we were to replace $\mu_2$ with $\mu_2 -\mu$, 
it would satisfy the desired differential condition~(\ref{differentialcondition}). 
However, $\phi^*(\mu_2-\mu)$  may not agree with $\beta$ on $X_2$. We subtract
off another term:  Since 
$$
  d^*\phi^*(\mu) = \phi^*\delta^*(\mu)=\phi^*(\gamma ) = 0,
$$ the 2-cochain $\phi^*(\mu )$ is a {\em cocycle} on the complex 
$X_{\DOT}$ and thus
lifts to a {\em cocycle} $\mu'$ of degree $-2$ 
on the bar complex, i.e., $\mu'$ satisfies
$\phi^*(\mu')=\phi^*(\mu)$. 
Then
$\phi^*(\mu_2-\mu+\mu') =\beta$ and 
$$
   \delta^* (\mu_2-\mu +\mu')+\mu_1\circ (\mu_1\ot\id - \id\ot\mu_1)
$$
is zero on the bar resolution
as $\delta^*\mu' = 0$ and $-\delta ^* (\mu_2 - \mu) =
   \mu_1\circ (\mu_1\ot\id - \id\ot\mu_1)$.
We hence replace $\mu_2$ by $\mu_2-\mu+\mu'$. 

We have now constructed maps $\mu_1, \mu_2$
satisfying the differential condition~(\ref{differentialcondition}) required
to obtain a second level graded deformation:
There exists a second level graded deformation $A_2$ of $A$
(with multiplication defined by $\mu_2$) extending $A_1$.

By Condition (iii) and degree considerations,
the obstruction
$$
  \mu_2\circ (\mu_1\ot\id - \id\ot \mu_1) + \mu_1\circ (\mu_2\ot \id
   -\id \ot \mu_2)
$$
 to lifting $A_2$ to a third level deformation
$A_3$
is 0 as a cochain on $X_{\DOT}$ under the cochain map $\phi^*$.
Therefore, as a cochain on the bar resolution, 
this obstruction is a coboundary,
and so represents the zero cohomology class in $\HH^3(A)$. 
Thus there exists a 2-cochain $\mu_3$ of degree $-3$
satisfying the obstruction
equation~(\ref{delta3}) for $i=3$,
and the deformation lifts to the third level by
Proposition~\ref{lemma:obstruction}.

The obstruction for a
third level graded deformation $A_3$ of $A$
to lift to the fourth level lies in $\HH^{3, -4}(A)$
(again by Proposition~\ref{lemma:obstruction}).
We apply $\phi^*$ to this obstruction 
(the right side of equation~(\ref{delta3}) with $i=4$) to obtain
a cochain on $X_3$. But there are no cochains of degree $-4$
on $X_3$ by definition (as it is generated by elements of
degree 3 or less), hence the obstruction is automatically zero.
Therefore the deformation may be continued
to the fourth level. Similar arguments show that it can be continued
to the fifth level, and so on. 

Let $A_t$ be the  (graded) deformation of $A$ that we obtain in this manner.
Then $A_t$ is the $k$-vector space $A [t]$
with multiplication
$$
  a_1*a_2 = a_1a_2 + \mu_1(a_1\ot a_2) t + \mu_2(a_1\ot a_2)t^2 + 
\mu_3(a_1\ot a_2) t^3 + \ldots \ , 
$$
where $a_1 a_2$ is the product in the homogeneous
quadratic algebra $A= S\# G \cong 
T_{kG}(U)/ \langle R \rangle\cong (T_k(V) \# G)/\langle R'\rangle$
and each $\mu_i:A\otimes A \rightarrow A$ is a $k$-linear map
of homogeneous degree $-i$.
(The sum terminates for each pair $a_1, a_2$
by degree considerations.)  Then for any $r$ in $R$,
$\mu_1(r)=\alpha(r)$ and $\mu_2(r)=\beta(r)$ by construction,
and $\mu_i(r)=0$ for $i\geq 3$ by considering degrees.

We now argue that $A'$ is isomorphic as a filtered algebra to the fiber
of the deformation $A_t$ at $t=1$.
Let $A'' = (A_t)|_{t=1}$.
First note that $A''$ is generated by $V$ and $G$, since
it is a filtered algebra (as $A_t$ is graded)
with associated graded algebra $A=S\# G$.
Next note that products of pairs of elements in $G$, or of an element of $V$
paired with an element of $G$, are the same in $A''$ 
as they are in $T_k(V)\# G$:
$g*v=gv$, $v*g=vg$, and $g*h=gh$ for all $g,h$ in $G$.
To verify this observation, one need 
only check that $\mu_1$ vanishes on such pairs,
as $A_t$ is a graded deformation with group elements in degree 0
and vectors in degree 1.
But $\mu_1$ must vanish on low degree pairs 
by our construction of chain maps:
$\mu_1=\psi_2^*(\alpha)$, the chain map $\psi_2$ preserves degree,
and $X_{0,2}$ has free basis as an 
$A^e$-module consisting of elements of degree 2.
Thus the canonical surjective algebra homomorphism 
$$
T_{kG}(U)\cong T_k(V)\# G \rightarrow A''
$$
arises, mapping each $v$ in $V$ and $g$ in $G$ to their copies in $A''$.
The elements of $P$ lie in the kernel (by definition of $A''$),
and thus the map induces a surjective algebra homomorphism:
$$
A'=T_{kG}(U)/\langle P \rangle \rightarrow A''.
$$

We claim this map is an isomorphism of filtered algebras.
First compare the dimensions in each filtered component of these two algebras.
Recall that algebra $A'$  has dimension at most that of $S\# G$ in
each filtered component (as its associated graded algebra
is a quotient of $S\# G$). 
Then since there is a surjective algebra homomorphism
from $A'$ to $A''$ and $\gr(A'')=S\# G$, 
$$
    \dim_k (F^m(S\# G)) \geq \dim _k (F^m(A')) \geq \dim_k(F^m(A''))
    = \dim_k(F^m(S\# G))
$$
for each degree $m$, where $F^m$ denotes the $m$-th filtered component. 
Thus the inequalities are forced to be equalities, and $A'\cong A''$,
a specialization of a deformation of $S\# G$.
Consequently $A'$ is of PBW type. 
\end{proof}

The next result points out a correspondence between
PBW filtered quadratic algebras and fibers of graded deformations
of a particular type: 

\begin{cor}
Let $A=S\# G$ for a finitely generated,
graded Koszul algebra $S$ over $k$ carrying the 
action of a finite group $G$ by graded automorphisms. 
Every graded deformation $A_t$ of $A$
for which the kernel of $\mu_1$ contains $kG\ot V$ and $V\ot kG$ 
 has fiber at $t=1$ isomorphic
(as a filtered algebra) to 
a filtered quadratic algebra over $kG$
of PBW type with induced quadratic algebra
isomorphic to $A$.  Conversely,
every such filtered quadratic algebra
is isomorphic to the fiber at $t=1$ of
a graded deformation of $A$ for which the kernel of $\mu_1$
contains $kG\ot V$ and $V\ot kG$. 
\end{cor}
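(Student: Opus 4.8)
The plan is to exploit the close relationship between the Corollary and Theorem~\ref{thm:main} together with the proof of that theorem, which already establishes both implications essentially at the level of individual algebras. The Corollary merely repackages these results as a bijective correspondence (up to isomorphism) between two classes of objects: graded deformations $A_t$ of $A = S\# G$ with $\mu_1$ vanishing on $kG\ot V$ and $V\ot kG$, and PBW filtered quadratic algebras over $kG$ whose induced homogeneous quadratic algebra is isomorphic to $A$. So the proof has two directions, each leveraging material already in hand.

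\smallskip

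For the forward direction, I would start with a graded deformation $A_t$ of $A$ for which $\ker\mu_1 \supseteq kG\ot V + V\ot kG$. Form the fiber $A'' = (A_t)|_{t=1}$; it is a filtered algebra with $\gr(A'') \cong A = S\# G$, hence generated by $V$ and $G$. Because $\mu_1$ vanishes on the relevant low-degree pairs (and $\mu_i$ for $i\ge 2$ vanishes there automatically by degree reasons, since $\mu_i$ has degree $-i$ and products of two generators have degree at most $2$), the products $g*h$, $g*v$, and $v*g$ in $A''$ agree with those in $T_k(V)\# G$. Thus the canonical surjection $T_{kG}(U) \cong T_k(V)\# G \twoheadrightarrow A''$ has a kernel $I$ generated in filtered degree $2$ — this is where I'd need to argue that the kernel is generated by its degree-$\le 2$ part, which follows because $A$ itself (the associated graded, hence the induced homogeneous algebra) is the quadratic algebra $T_{kG}(U)/\langle R\rangle$, so lifting a generating set $R$ of the degree-$2$ relations of $A$ to filtered relations $P$ of $A''$ gives $\langle P\rangle \subseteq I$ with matching associated graded; a dimension count in each filtered component (exactly as in the last paragraph of the proof of Theorem~\ref{thm:main}) forces $\langle P\rangle = I$ and shows $A''$ is of PBW type with induced quadratic algebra $A$. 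Taking $P$ to be a $kG$-subbimodule closed under the operations of Proposition~\ref{uniquePBWrelations} makes it canonical.

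\smallskip

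For the converse, let $A'$ be a filtered quadratic algebra over $kG$ of PBW type whose induced homogeneous quadratic algebra is isomorphic to $A = S\# G$. Since $S$ is a finitely generated graded Koszul algebra with $G$ acting by graded automorphisms, Theorem~\ref{thm:main} applies directly: its proof \emph{constructs} a graded deformation $A_t$ of $A$ together with an isomorphism of filtered algebras $A' \cong (A_t)|_{t=1}$. It remains only to observe that the deformation produced there has $\mu_1$ vanishing on $kG\ot V$ and $V\ot kG$ — but this is exactly the observation made near the end of that proof (that $\mu_1 = \psi_2^*(\alpha)$, $\psi_2$ preserves degree, and $X_{0,2}$ has an $A^e$-basis of degree-$2$ elements, so $\mu_1$ kills pairs of total degree $\le 1$), so nothing new is needed.

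\smallskip

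I expect the only genuine subtlety to be the forward direction's claim that the kernel $I$ of $T_{kG}(U)\twoheadrightarrow A''$ is generated in filtered degree $2$ and that the resulting filtered quadratic algebra is of PBW type — but this is precisely the content of the dimension-counting argument already carried out in the proof of Theorem~\ref{thm:main}, transported to this setting, so the Corollary follows with little additional work beyond assembling the pieces and noting the correspondence is well-defined up to the isomorphisms indicated.
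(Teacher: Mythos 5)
Your proposal is correct and follows essentially the same route as the paper: the forward direction reverse-engineers the relations $P$ by lifting $R$ into the fiber (the paper writes this lift explicitly as $P=\{r-\mu_1(r)-\mu_2(r):r\in R\}$) and then invokes the dimension-count argument from the end of the proof of Theorem~\ref{thm:main}, while the converse simply cites the deformation constructed in that proof and the observation that its $\mu_1$ kills $kG\ot V$ and $V\ot kG$. No gaps.
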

\begin{proof}
Write the algebra $A=S\# G$ as
$T_{kG}(U)/\langle R\rangle $ where $U=V\ot kG$ for
some finite dimensional $k$-vector space $V$ generating $S$
and set of filtered quadratic relations $R\subset U\ot_{kG}U$.
Suppose $A_t$  
is a graded deformation of $A$
with multiplication map $\mu_1$ vanishing on
all $v\ot g$ and $g\ot v$ for $g$ in $G$ and $v$ in $V$.
(Higher degree maps automatically vanish on such input
as $A_t$ is graded.)
We may reverse engineer the filtered quadratic algebra 
$A'=T_{kG}(U)/\langle P \rangle$ by setting
$P=\{r-\mu_1(r)-\mu_2(r): r\in R\}$.  
The argument at the end
of the proof of Theorem~\ref{thm:main} implies that 
$A'$ is of PBW type and isomorphic to the fiber
of $A_t$ at $t=1$ as a filtered algebra, as this fiber
is a quotient of $T_k(V)\# G$.
Conversely, any PBW filtered quadratic algebra satisfies the
conditions of Theorem~\ref{thm:main}; its proof constructs a graded 
deformation $A_t$, for which the kernel of $\mu_1$ contains $kG\ot V$
and $V\ot kG$,
and whose fiber $A''$ at $t=1$ is isomorphic to $A'$ as a filtered algebra.
\end{proof}

\section{Applications: Drinfeld orbifold algebras, graded Hecke algebras,
and symplectic reflection algebras}\label{DOAs}

We now apply our results from previous sections
to Drinfeld orbifold algebras,
which include graded Hecke algebras, 
rational Cherednik algebras, symplectic reflection
algebras, and Lie orbifold algebras as special cases.
These algebras present as a certain kind of quotient
of a skew group algebra.
Let $G$ be a finite group acting linearly on a finite dimensional
$k$-vector space
$V$, and consider the induced action on $S_k(V)$ and on $T_k(V)$. 
Drinfeld orbifold algebras are deformations
of the skew group algebra $S_k(V)\# G$ (see~\cite{doa}).
Many articles investigate their properties and representation theory, 
in particular, when $k$ is the field of real or complex numbers,
when $G$ acts faithfully, and when $G$ acts symplectically.
We assume that the characteristic of $k$ is not $2$ throughout this
section.
We are especially interested in the case when
the characteristic of $k$ divides $|G|$,
where the theory is much less developed.

Drinfeld orbifold algebras arise as quotients of the form
$$
\cH_{\kappa}=T_k(V)\#G/\langle v_1\otimes v_2-v_2\otimes v_1-\kappa(v_1,v_2): 
v_1,v_2\in V\rangle\ $$
where $\kappa$ is a parameter function on $V\tensor V$ taking
values in the group algebra $kG$, or possibly in $V$, 
or some combination of $kG$ and $V$.
We abbreviate $v\otimes 1$ by $v$ and $1\otimes g$ by $g$
in the skew group algebra $T_k(V)\# G$
(which is isomorphic
to $T_k(V)\otimes kG$ as a $k$-vector space).
We take $\kappa$ to be any alternating map
from $V\otimes V$ to the first filtered component
of $T_k(V)\# G$: 
$$\kappa:V\otimes V \rightarrow 
kG \oplus (V\otimes kG)\, , $$
and write $\kappa (v_1, v_2)$ for $\kappa(v_1\ot v_2)$ for ease of notation.  
The associated graded algebra of $\cH_{\kappa}$ is a quotient of 
$S_k(V)\# G $. 
We say that $\cH_{\kappa}$ is a {\bf Drinfeld orbifold algebra}
if it satisfies the Poincar\'e-Birkhoff-Witt property:
$ \ 
\gr(\cH_{\kappa}) \cong S_k(V)\# G
$
as graded algebras.
We explained in~\cite{doa} how every Drinfeld orbifold
algebra defines a formal deformation of $S_k(V)\#G$ and we also explained 
which deformations arise this way explicitly.

Various authors explore conditions on $\kappa$ guaranteeing
that the quotient $\cH_{\kappa}$ satisfies the PBW property.
Such Drinfeld orbifold algebras are often called:
\begin{itemize}
\item
{\bf Rational Cherednik algebras} when $G$ is a real or complex
reflection group 
acting diagonally on $V=X\oplus X^*$,
for $X$ the natural reflection representation,
and $\kappa$ has image in $kG$ and a particular geometric form,
\item
{\bf Symplectic reflection algebras} when $G$ acts on any symplectic
vector space $V$ and $\kappa$ has image in $kG$,
\item
{\bf Graded affine Hecke algebras} when $G$ is a Weyl group
(or Coxeter group) and $\kappa$ has image in $kG$,
\item
{\bf Drinfeld Hecke algebras} when $G$ is arbitrary
and $\kappa$ has image in $kG$,
\item
{\bf Quantum Drinfeld Hecke algebras} when
$G$ is arbitrary and the nonhomogeneous
relation $v_1\otimes v_2 = v_2\otimes v_1 + \kappa(v_1,v_2)$
is replaced by a quantum version $v_i\otimes v_j = q_{ij} v_j \otimes v_i + 
\kappa(v_i,v_j)$ (for a system of quantum parameters
$\{q_{ij}\}$)
and $\kappa$ has image in $kG$,
\item
{\bf Lie orbifold algebras} when $\kappa$ has image in $V\oplus kG$
(defining a deformation of the universal enveloping
algebra of a Lie algebra with group action).
\end{itemize}
Terminology arises from various settings.  Drinfeld~\cite{Drinfeld} originally
defined these algebras for arbitrary groups and
for $\kappa$ with image in $kG$.
Around the same time, Lusztig (see~\cite{Lusztig88, Lusztig}, for example) 
explored a graded version of the affine Hecke algebra for Coxeter groups.
Ram and Shepler~\cite{RamShepler} showed that Lusztig's graded
affine Hecke algebras are a special case of Drinfeld's construction.
Etingof and Ginzburg~\cite{EtingofGinzburg} rediscovered 
Drinfeld's algebras for symplectic groups in the context of orbifold theory.
The general case (when $\kappa$ maps to 
the filtered degree 1 piece of $T_k(V)\#G$ and $G$ acts with
an arbitrary representation) is explored in~\cite{doa};
see~\cite{HOT} in case the field is the real numbers. 

The original conditions of Braverman and Gaitsgory
were adapted and used for determining which 
$\kappa$ define Drinfeld orbifold algebras,
but arguments relied on the fact that 
the skew group algebra $S_k(V)\# G$ is Koszul
as an algebra over the semisimple ring $kG$.
(This was the approach taken by Etingof and Ginzburg~\cite{EtingofGinzburg}.)
Indeed, we used this theory in~\cite{doa} to establish
PBW conditions in the nonmodular setting. 
However the technique fails in modular characteristic
(i.e., when the characteristic of $k$ divides the order of $G$).

The results in previous sections allow us to overlook
the nonsemisimplicity of 
the group algebra in determining 
which quotients $\cH_{\kappa}$
satisfy the PBW property. We
consider each Drinfeld orbifold algebra
as a nonhomogeneous quadratic algebra whose homogeneous version
is the skew group algebra formed from a Koszul algebra
and a finite group.
We are now able to give a new, shorter proof of
Theorem 3.1 in~\cite{doa} using the methods of Braverman and Gaitsgory but
in arbitrary characteristic. Thus we bypass tedious application
of the Diamond Lemma~\cite{Bergman}. (Details of a long computation 
using the Diamond Lemma over arbitrary fields   
were largely suppressed in~\cite{doa}.)

Decompose the alternating map 
$\kappa:V\ot V\rightarrow (k \oplus V)\ot kG\, $ into
its linear and constant parts by writing
$$\kappa(v_1,v_2)
=\sum_{g\in G}\Big(\kappa_g^C(v_1,v_2)+\kappa_g^L(v_1,v_2)\Big)\ot g$$
for maps $\kappa^C_g:V\otimes V\rightarrow k$ and
$\kappa^L_g: V\otimes V\rightarrow V$, for all $g$ in $G$.
Let $\Alt_3$ denote the alternating group on three symbols.

\begin{prop} \cite[Theorem 3.1]{doa} 
Let $G$ be a finite group acting linearly
on a finite dimensional vector space $V$ over a field $k$
of arbitrary characteristic.  
The quotient algebra
$$
\cH_{\kappa}=T_k(V)\#G/\langle v_1\otimes v_2-v_2\otimes v_1-\kappa(v_1,v_2): 
v_1,v_2\in V\rangle\ $$
satisfies the Poincar\'e-Birkhoff-Witt property if and only if
\begin{itemize}
\item
$\kappa$ is $G$-invariant,
\item
$\displaystyle{\sum_{\sigma\in \text{Alt}_3}
\ \kappa^L_{g}(v_{\sigma(2)}, v_{\sigma(3)}) 
(v_{\sigma(1)}-\, ^gv_{\sigma(1)})=0}$ in $S_k(V)$,
\item
$\displaystyle{\sum_{\sigma\in \text{Alt}_3,h\in G}
\ \kappa^L_{gh^{-1}}\Big(v_{\sigma(1)}+\, ^hv_{\sigma(1)}, 
\kappa^L_h(v_{\sigma(2)}, v_{\sigma(3)})\Big)}$\\
  $\hphantom{x} \hspace{2cm}=
\displaystyle{2\sum_{\sigma\in \text{Alt}_3}
\kappa^C_{g}(v_{\sigma(2)}, v_{\sigma(3)})
(v_{\sigma(1)}-\, ^gv_{\sigma(1)})}$,
\item
$\displaystyle{\sum_{\sigma\in \text{Alt}_3,h\in G}
\ \kappa^C_{gh^{-1}}\Big(v_{\sigma(1)}+\, ^hv_{\sigma(1)}, 
\kappa^L_h(v_{\sigma(2)}, v_{\sigma(3)})\Big)=0}$,
\end{itemize}
for all $v_1, v_2, v_3$ in $V$ and $g$ in $G$.
\end{prop}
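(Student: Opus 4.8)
The plan is to obtain this as a special case of Theorem~\ref{thm:main}, taking $S=S_k(V)$, the symmetric algebra, which is a finitely generated graded Koszul algebra over $k$ on which $G$ acts by graded automorphisms. Concretely, I would put $R'=\{v\ot w-w\ot v\mid v,w\in V\}\subset V\ot V$, so that $S_k(V)=T_k(V)/\langle R'\rangle$; let $U=V\ot kG$, a $kG$-bimodule via right multiplication on $kG$ and left action ${}^g(v\ot h)={}^gv\ot gh$; and set $R=R'\ot kG\subset U\ot_{kG}U$. Then $T_{kG}(U)/\langle R\rangle\cong S_k(V)\#G$, and if $P$ denotes the $kG$-subbimodule of $F^2(T_{kG}(U))$ generated by the defining relations $v_1\ot v_2-v_2\ot v_1-\kappa(v_1,v_2)$, then $\pi(P)=R$ (both are the $kG$-bimodule generated by $R'\ot1$). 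Thus $\cH_\kappa=T_{kG}(U)/\langle P\rangle$ is a filtered quadratic algebra over $kG$ whose induced homogeneous algebra is $S_k(V)\#G$, and Theorem~\ref{thm:main} asserts that $\cH_\kappa$ is of PBW type exactly when Conditions (I), (i), (ii), (iii) hold. The rest of the argument consists of translating these four conditions into the four displayed identities, in the order $(\mathrm{I})\leftrightarrow$ first bullet, $(\mathrm{i})\leftrightarrow$ second, $(\mathrm{ii})\leftrightarrow$ third, $(\mathrm{iii})\leftrightarrow$ fourth.

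First I would handle Condition (I). The degree-two leading term of the generator $g\cdot(v_1\ot v_2-v_2\ot v_1-\kappa(v_1,v_2))\cdot h$, after moving group elements to the right, is $({}^gv_1\ot{}^gv_2-{}^gv_2\ot{}^gv_1)\ot gh\in\bigwedge^2V\ot kG$; comparing two generators with equal leading term shows that their difference lies in $F^1(T_{kG}(U))$, and vanishing of all such differences forces exactly ${}^g\kappa(v_1,v_2)=\kappa({}^gv_1,{}^gv_2)$ (interpreted with the conjugation action on the group index), i.e. $G$-invariance of $\kappa$. Conversely, when $\kappa$ is $G$-invariant one checks directly that $P=\{x-\alpha(x)-\beta(x)\mid x\in R\}$ for the $kG$-bimodule maps $\alpha\colon R\to U$ and $\beta\colon R\to kG$ determined on $R'\ot1\cong\bigwedge^2V$ by $\alpha(v_1\wedge v_2)=\sum_{g}\kappa^L_g(v_1,v_2)\ot g$ and $\beta(v_1\wedge v_2)=\sum_g\kappa^C_g(v_1,v_2)g$, so $P$ meets $F^1$ only in $0$. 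Hence (I) holds if and only if $\kappa$ is $G$-invariant, which is the first bullet.

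It remains to evaluate Conditions (i), (ii), (iii) of Lemma~\ref{lemma:3condns}. Because $S=S_k(V)$, the relevant intersection is $(R'\ot V)\cap(V\ot R')=\bigwedge^3V$ (the degree-three term of the Koszul complex $S\ot\bigwedge^{\DOT}V$ of the symmetric algebra; see Section~\ref{sec:relative}), so $(R\ot_{kG}U)\cap(U\ot_{kG}R)\cong\bigwedge^3V\ot kG$, with the two standard embeddings into $U^{\ot_{kG}3}$ given by the usual antisymmetrizations $w_1\wedge w_2\wedge w_3\mapsto(w_1\wedge w_2)\ot w_3-(w_1\wedge w_3)\ot w_2+(w_2\wedge w_3)\ot w_1$ and $w_1\wedge w_2\wedge w_3\mapsto w_1\ot(w_2\wedge w_3)-w_2\ot(w_1\wedge w_3)+w_3\ot(w_1\wedge w_2)$. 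Applying $\alpha\ot_{kG}\id-\id\ot_{kG}\alpha$ to $v_1\wedge v_2\wedge v_3\ot g$ and pushing all group elements to the far right, one finds (using that $\kappa^L$ is alternating) that its image in $S_k(V)\#G$ is $-\sum_{h}\big(\sum_{\sigma\in\Alt_3}\kappa^L_h(v_{\sigma(2)},v_{\sigma(3)})(v_{\sigma(1)}-{}^hv_{\sigma(1)})\big)h$; so Condition (i), that this element lie in $R$, is exactly the second bullet. Next, composing with $\alpha$ and with $\beta$ and using $G$-invariance to rewrite the results as double sums over $\Alt_3$ and over $G$ (the inputs $v_{\sigma(1)}+{}^hv_{\sigma(1)}$ appearing because $\alpha$ and $\beta$ are applied after the antisymmetrization above), Condition (ii) becomes the third bullet and Condition (iii) the fourth. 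Running the translations backwards shows each bullet returns the corresponding one of (I), (i), (ii), (iii), so the equivalence is biconditional.

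I expect the main obstacle to be organizational rather than conceptual: correctly identifying $R\ot_{kG}U$, $U\ot_{kG}R$, and their intersection as (twisted) copies of $\bigwedge^3V\ot kG$, and then carrying out the braided tensor-product bookkeeping of Section~\ref{sec:resolution} --- each $\ot_{kG}$ contributing a ${}^g(-)$ and a potential sign --- so that the computed images of $\alpha\ot\id-\id\ot\alpha$ and $\beta\ot\id-\id\ot\beta$ match the alternating sums over $\Alt_3$ \emph{verbatim}, including the factor $2$ in the third bullet (where the hypothesis $\mathrm{char}\,k\neq2$ is used to pass between sums over $\Alt_3$ and over $S_3$) and all signs coming from the normalization $x-\alpha(x)-\beta(x)$ versus $v_1\ot v_2-v_2\ot v_1-\kappa(v_1,v_2)$. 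As in the proof of Theorem~\ref{thm:main}, Condition (J) of Lemma~\ref{lemma:IJ} need not be treated separately, being subsumed by (I) together with (i)--(iii).
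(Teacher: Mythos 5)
Your proposal follows essentially the same route as the paper: realize $\cH_\kappa$ as a filtered quadratic algebra $T_{kG}(U)/\langle P\rangle$ over $kG$ with $U=V\ot kG$ and induced homogeneous algebra $S_k(V)\#G$, then invoke Theorem~\ref{thm:main} and translate Conditions (I), (i), (ii), (iii) into the four bullets. The only difference is one of exposition: the paper outsources that final translation to the first proof of Theorem~3.1 in~\cite{doa}, whereas you sketch the computation (identification of $(R'\ot V)\cap(V\ot R')$ with $\bigwedge^3V$, the antisymmetrization bookkeeping, the factor of $2$) directly, and your sketch is consistent with what that reference carries out.
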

\begin{proof}
Let $U=V\otimes kG$.
Consider $T_k(V)\# G\cong T_{kG}(U)$ to be a $kG$-bimodule under
the action  
$g_1(v\tensor g_2)g_3=\, ^{g_1}v\tensor g_1 g_2 g_3$
for $g_i$ in $G$ and $v$ in $T_k(V)$.
As before, we filter $T:=T_{kG}(U)$
by setting  $F^i(T)=T^0+T^1+\ldots+ T^i$
where $T^i=U^{\otimes_{kG}\, i}$ for $i>0$
and $T^0=kG$. 
Extend $\kappa$ to a map
$\kappa: T^2_{kG}(U)\rightarrow kG\oplus U$ 
defined by $\kappa((v_1\otimes g_1)\otimes_{kG} (v_2\otimes g_2))
=\kappa(v_1,\, ^{g_1}v_2)\ot g_1 g_2$ for $v_i$ in $V$ and $g$ in $G$.
(Note that $\kappa$ extends to a unique
$kG$-bimodule map $\kappa: T^2_{kG}(U)\rightarrow kG\oplus U$ if, and
only if, $\kappa$ is $G$-invariant: ${}^g (\kappa({}^{g^{-1}}u,
{}^{g^{-1}}v))= \kappa(u,v)$ for all $u,v$ in $V$ and $g$ in $G$. )

We set $P$ to be the generating nonhomogeneous relations
parametrized by $\kappa$: Let
$P$ be the $kG$-subbimodule of $F^2(T)$
generated by all
$$v_1\otimes_{kG} v_2
-v_2 \otimes_{kG} v_1
- \kappa(v_1 , v_2)$$
for $v_1,v_2$ in $V$.
We then have an isomorphism of filtered algebras,
$\cH_{\kappa} \cong
T_{kG}(U)/\langle P \rangle $.
Thus, $\cH_{\kappa}$ satisfies the PBW property
if and only if $T_{kG}(U)/\langle P \rangle$
exhibits PBW type with respect to $P$ as a filtered
quadratic algebra over $kG$.

We now consider the homogeneous version $A$ determined by $P$.
Set $R=\pi(P)$, the $kG$-subbimodule of $T^2_{kG}(U)$ generated by all
$v_1\otimes_{kG} v_2-v_2 \otimes_{kG} v_1$
for $v_1,v_2$ in $V$.
Set $A:=T_{kG}(U)/\langle R \rangle$
and note that $A\cong 
S_k(V)\# G
\cong
T_{kG}(U)/ \langle \pi(P) \rangle$. 
Then as $S_k(V)$ is Koszul,
the  conditions of Theorem~\ref{thm:main}
apply to give explicit conditions on
$\kappa$ under which $\cH_{\kappa}$ is of
PBW type.  We apply the conditions as in the first proof of
Theorem~3.1 of~\cite{doa} without needing the extra
assumption there that $kG$ is semisimple.
\end{proof}

In the next corollary, we set $\kappa^L_g\equiv 0$ for all $g$ in $G$
to recover a modular version of a 
result that is well-known in characteristic zero
(stated in~\cite{Drinfeld}, then confirmed
in~\cite{EtingofGinzburg} and~\cite{RamShepler}).  The positive
characteristic result was first
shown by Griffeth~\cite{Griffeth} 
by construction of an explicit $\cH_{\kappa}$-module,
as in the classical proof of the PBW theorem for 
universal enveloping algebras of Lie algebras.
(See also Bazlov and Berenstein~\cite{BazlovBerensteinBraidedDoubles}
for a generalization.)
Our approach yields a different proof.
Note that several authors (for example,
Griffeth~\cite{Griffeth} and Balagovic and Chen~\cite{BalagovicChen})
study the representations of
rational Cherednik algebras in the modular setting.
\begin{cor}
Let $G$ be a finite group acting linearly
on a finite dimensional vector space $V$ over a field $k$
of arbitrary characteristic.  
Let $\kappa:V\ot V\rightarrow kG$
be an alternating map.  The quotient algebra
$$
\cH_{\kappa}=T_k(V)\#G/\langle v_1\otimes v_2-v_2\otimes v_1-\kappa(v_1,v_2): 
v_1,v_2\in V\rangle\ $$
satisfies the Poincar\'e-Birkhoff-Witt property if
and only if
$\kappa$ is $G$-invariant and
$$
0=\sum_{\sigma\in\text{Alt}_3}
\kappa_g(v_{\sigma(2)}\ot v_{\sigma(3)})\ 
(\, ^gv_{\sigma(1)}-v_{\sigma(1)})
$$
for all $v_1, v_2$ in $V$ and $g$ in $G$.
\end{cor}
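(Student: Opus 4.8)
The plan is to prove the equivalence directly from Theorem~\ref{thm:main}, rather than by specializing the preceding Proposition to $\kappa^L_g\equiv 0$. Substituting $\kappa^L_g\equiv 0$ into that Proposition's third condition collapses its left-hand side to $0$ and leaves $0=2\sum_{\sigma\in\text{Alt}_3}\kappa^C_g(v_{\sigma(2)},v_{\sigma(3)})(v_{\sigma(1)}-{}^gv_{\sigma(1)})$; recovering the displayed formula from this would require dividing by $2$, which is illegitimate in characteristic $2$. Applying Theorem~\ref{thm:main} directly sidesteps that division entirely. First I would set $U=V\ot kG$ and $R=R'\ot kG$, where $R'=\{v\ot w-w\ot v:v,w\in V\}$ presents the symmetric algebra $S_k(V)=T_k(V)/\langle R'\rangle$, and record $P=\{x-\kappa(x):x\in R\}$ as the filtered quadratic relations over $kG$, so that $\cH_\kappa\cong T_{kG}(U)/\langle P\rangle$ with induced homogeneous algebra $A=T_{kG}(U)/\langle R\rangle\cong S_k(V)\#G$. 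Since $S_k(V)$ is Koszul, Theorem~\ref{thm:main} applies. The decisive structural point is that $\kappa$ takes values in $kG=T^0$, so $P$ has no homogeneous component in degree one; in the notation of Lemma~\ref{lemma:3condns} this forces $\alpha=0$ and $\beta=\kappa$.

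With $\alpha=0$, three of the four conditions become transparent. Condition (i) reads $\Ima(\alpha\ot_{kG}\id-\id\ot_{kG}\alpha)\subset R$, that is $\{0\}\subset R$, while Condition (iii) reads $\beta\circ 0=0$; both hold automatically. Condition (I) of Lemma~\ref{lemma:IJ}, that the $kG$-subbimodule $P$ generated by the relations meets $F^1(T)$ only in $0$, is equivalent to the $G$-invariance of $\kappa$: the difference between $g\cdot(v_1\ot v_2-v_2\ot v_1-\kappa(v_1,v_2))$ and $({}^gv_1\ot{}^gv_2-{}^gv_2\ot{}^gv_1-\kappa({}^gv_1,{}^gv_2))\cdot g$ lies in $P\cap F^0(T)$ and equals $\kappa({}^gv_1,{}^gv_2)g-g\kappa(v_1,v_2)$, which vanishes precisely when ${}^g(\kappa(v_1,v_2))=\kappa({}^gv_1,{}^gv_2)$; moreover $G$-invariance is exactly what makes $\beta=\kappa$ a $kG$-bimodule homomorphism, so that Lemma~\ref{lemma:3condns} is available. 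The entire PBW property thus rests on Condition (ii), which with $\alpha=0$ reduces to
\[
  (\beta\ot_{kG}\id-\id\ot_{kG}\beta)\big|_{(R\ot_{kG}U)\cap(U\ot_{kG}R)}=0,
\]
a condition \emph{linear} in $\beta$, with no quadratic-in-$\alpha$ contribution alongside it to absorb a factor of $2$.

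It then remains to unwind this equation into the stated formula. I would identify the domain $(R\ot_{kG}U)\cap(U\ot_{kG}R)$ with the degree-three Koszul space $K^3(S_k(V))\ot kG=(\Wedge^3V)\ot kG$, using flatness of $kG$ to commute the intersection past $\ot kG$; and since $\beta\ot_{kG}\id$ and $\id\ot_{kG}\beta$ are maps of $kG$-bimodules, hence right $kG$-linear, it suffices to evaluate on generators $\omega\ot 1$ with $\omega=\sum_{\sigma\in S_3}\sgn(\sigma)\,v_{\sigma(1)}\ot v_{\sigma(2)}\ot v_{\sigma(3)}$. Writing $\omega$ once as a member of $R'\ot V$ (to apply $\beta\ot\id$) and once as a member of $V\ot R'$ (to apply $\id\ot\beta$), and applying $\beta=\kappa$ with $\kappa(v,w)=\sum_{g\in G}\kappa_g(v,w)\,g$ in the appropriate tensor slot, the skew-group multiplication twists the surviving vector by $g$ each time the group element $g$ emerging from $\kappa$ is carried past it on the left, but leaves it untwisted when carried past on the right. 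Collecting the coefficient of each $g\in G$ in $V\ot kG$ yields
\[
  \sum_{\sigma\in\text{Alt}_3}\kappa_g(v_{\sigma(2)}\ot v_{\sigma(3)})\,\big({}^gv_{\sigma(1)}-v_{\sigma(1)}\big),
\]
and Condition (ii) demands that this vanish for every $g$, which is exactly the displayed condition of the corollary.

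The main obstacle is the bookkeeping in this final step: one must track the two groupings of $\omega$ against the skew-group twists carefully enough to confirm that the ${}^gv_{\sigma(1)}$ terms arise from $\beta\ot\id$ (left carry, twisted) while the $v_{\sigma(1)}$ terms arise from $\id\ot\beta$ (right carry, untwisted), and that the three even permutations reassemble with correct signs into a single sum over $\text{Alt}_3$. Because this computation involves no division, and in particular no factor of $2$ requiring inversion, the resulting equivalence is valid over a field of arbitrary characteristic, including characteristic $2$, as the corollary asserts; the characteristic restriction needed for the general Proposition is precisely the one the vanishing of $\alpha$ renders unnecessary here.
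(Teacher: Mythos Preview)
Your argument is correct. The paper's proof of the corollary is the one-line specialization ``set $\kappa^L_g\equiv 0$ in the preceding Proposition''; you instead return to Theorem~\ref{thm:main} and apply it directly with $\alpha=0$ and $\beta=\kappa$, so that conditions (i) and (iii) are vacuous, condition (I) encodes $G$-invariance, and condition (ii) becomes the linear equation $(\beta\ot_{kG}\id-\id\ot_{kG}\beta)=0$ on $K^3(S_k(V))\ot kG$. Since the Proposition itself is proved from Theorem~\ref{thm:main}, the underlying mathematics is the same, but your organization has a genuine payoff: it never produces the factor of $2$ that appears in the Proposition's third bullet, so no division is required. By contrast, specializing the Proposition in characteristic $2$ makes that third bullet collapse to $0=0$ and loses the condition entirely. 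One caveat: the paper imposes a standing hypothesis $\operatorname{char} k\neq 2$ at the start of Section~\ref{DOAs}, so despite the phrase ``arbitrary characteristic'' in the corollary the paper is not formally claiming the characteristic-$2$ case; your direct argument therefore proves slightly more than the paper does.
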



\end{document}